\newtheorem{theorem}{Theorem}
\newtheorem{lemma}{Lemma}
\newtheorem{corollary}{Corollary}
\newcommand{\leqnomode}{\tagsleft@true}
\newcommand{\reqnomode}{\tagsleft@false}
\def\({\begin{eqnarray}}
\def\){\end{eqnarray}}
\def\[{\begin{eqnarray*}}
\def\]{\end{eqnarray*}}
\def\part#1#2{\frac{\partial #1}{\partial #2}}
\def\R{\mathbb{R}}
\def\N{\mathbb{N}}
\def\d{\mathrm{d}}
\def\tot#1#2{\frac{\d #1}{\d #2}}
\def\eps{\varepsilon}
\def\grad{\nabla}
\def\taumax{\overline\tau} 
\def\P{\mathbb{P}}
\def\W{\mathcal{W}}
\begin{document}

\title{A simple proof of asymptotic consensus in the Hegselmann-Krause
and Cucker-Smale models with renormalization and delay}   
\author{Jan Haskovec\footnote{Computer, Electrical and Mathematical Sciences \& Engineering, King Abdullah University of Science and Technology, 23955 Thuwal, KSA.
jan.haskovec@kaust.edu.sa}}

\date{}

\maketitle

\begin{abstract}
We present a simple proof of asymptotic consensus in the discrete Hegselmann-Krause model
and flocking in the discrete Cucker-Smale model with renormalization and variable delay.
It is based on convexity of the renormalized communication weights
and a Gronwall-Halanay-type inequality.
The main advantage of our method, compared to previous approaches to the delay Hegselmann-Krause model,
is that it does not require any restriction on the maximal time delay, or the initial data,
or decay rate of the influence function. From this point of view the result is optimal.
For the Cucker-Smale model it provides an analogous result in the regime of unconditonal flocking
with sufficiently slowly decaying communication rate, but still without any restriction on the length
of the maximal time delay.
Moreover, we demonstrate that the method can be easily extended to the mean-field
limits of both the Hegselmann-Krause and Cucker-Smale systems,
using appropriate stability results on the measure-valued solutions.
\end{abstract}
\vspace{2mm}

\textbf{Keywords}: Hegselmann-Krause model, asymptotic consensus, Cucker-Smale model, flocking, long-time behavior, variable delay.
\vspace{2mm}


\vspace{2mm}

\section{Introduction}\label{sec:Intro}
In this paper we study asymptotic behavior of the Hegselmann-Krause \cite{HK}
and Cucker-Smale \cite{CS1, CS2} models with normalized communication weights
and variable time delay.
The Hegselmann-Krause model describes the evolution of $N\in\N$ agents who adapt their opinions to the ones of other members of the group.
Agent $i$'s opinion is represented by the quantity $x_i = x_i(t)\in\R^d$, with $d\in\N$ the space dimension, which is a function of time $t\geq 0$.
Assuming that the agents' communication takes place subject to a variable delay $\tau=\tau(t)$,
the opinions evolve according to the following system
\( \label{eq:HK}
   \dot x_i(t) = 
     \sum_{j=1}^N \psi_{ij}(t) (x_j(t-\tau(t)) - x_i(t)), \qquad i=1,\ldots,N.
\)
The normalized communication weights $\psi_{ij}=\psi_{ij}(t)$ are given by
\(\label{psi}
   \psi_{ij}(t) := \left\{ \begin{array}{ll}
         \displaystyle \frac{\psi(|x_j(t - \tau(t)) - x_i(t)|)}{\sum_{\ell\neq i} \psi(|x_\ell(t - \tau(t)) - x_i(t)|)} & \textrm{if $j \neq i$,}\\[4mm]
         0 & \textrm{if $j=i$,}
  \end{array} \right.
\)
where the nonnegative \emph{influence function} $\psi:[0,\infty)\to [0,\infty)$,
also called \emph{communication rate},
measures how strongly each agent is influenced by others depending on their distance.
The variable time delay $\tau=\tau(t)$ is assumed to be nonegative and uniformly bounded by some $\taumax>0$;
this includes the generic case of the constant delay $\tau(t)\equiv\taumax$.
The system \eqref{eq:HK} is equipped with the initial datum
\(\label{IC:HK}
   x_i(s) = x^0_i(s),\qquad i=1,\cdots,N, \quad s \in [-\taumax,0],
\)
with prescribed trajectories $x^0_i\in C([-\taumax,0])$, $ i=1,\cdots,N$.

The phenomenon of consensus finding in the context of \eqref{eq:HK} refers
to the (asymptotic) emergence of one or more {opinion clusters} formed
by agents with (almost) identical opinions \cite{JM}. \emph{Global consensus}
is the state where all agents have the same opinion, i.e.,
$x_i=x_j$ for all $i,j \in\{1,\dots,N\}$.
Convergence to global consensus as $t\to\infty$ for the system \eqref{eq:HK} has been
proved in \cite{CPP} under a set of conditions requiring smallness of the maximal time delay
in relation to the decay speed of the influence function and the fluctuation of the initial datum.
However, as we shall discuss in Section \ref{sec:mam}, there is justified expectation that asymptotic global consensus
is reached without any restriction on the maximal time delay $\taumax$, for all initial data,
and only assuming that the influence function
is strictly positive on $[0,\infty)$, but may decay to zero arbitrarily fast at infinity.
In this paper we prove that this intuitive expectation is indeed true.
As this result is essentially optimal, it closes an important gap in the theory
of asymptotic behavior of the Hegselmann-Krause model with delay.

Our method of proof consists of three ingredients.
First, we prove nonexpansivity of the agent group, i.e., a uniform bound on the position radius of the agents.
Second, we establish a convexity argument, exploiting the
following property of the renormalized weights \eqref{psi},
\(  \label{psi:conv}
   \sum_{j=1}^N \psi_{ij} = 1 \qquad\mbox{for all } i=1,\cdots, N.
\)
This implies that the terms $\sum_{j=1}^N \psi_{ij}(t) x_j(t-\tau(t))$ in \eqref{eq:HK} are convex combinations
of the vectors $x_j(t-\tau(t))$.
Finally, a Gronwall-Halanay-type inequality shall
provide exponential decay to zero of solutions of a certain delay differential inequality.


A second goal of this paper is to study, by a slight extension of the above method, the asymptotic behavior of a variant of the
Cucker-Smale model \cite{CS1, CS2} with renormalized communication weights, introduced by Motsch and Tadmor \cite{MT}.
The model can be seen as a second-order version of \eqref{eq:HK}
and reads, for $i=1,\cdots,N$,
\begin{align}\label{eq:CS}
\begin{aligned}
      \dot x_i(t) &= v_i(t), \\ 
      \dot v_i(t) & = \sum_{j=1}^N \psi_{ij}(t) (v_j(t - \tau(t)) - v_i(t)),
\end{aligned}
\end{align}
where the normalized communication weights $\psi_{ij}=\psi_{ij}(t)$ are again given by \eqref{psi}.
The system is equipped with the initial datum
\(\label{IC:CS}
   x_i(s) = x^0_i(s),\quad v_i(s) = v^0_i(s), \qquad i=1,\cdots,N, \quad s \in [-\taumax,0],
\)
with prescribed phase-space trajectories $x^0_i, v^0_i\in C([-\taumax,0]; \R^d)$, $ i=1,\cdots,N$.
For physical reasons it may be required that
$$  x_i^0(s) = x^0_i(-\taumax) + \int_{-\taumax}^s v_i^0(\sigma) \d\sigma\quad\mbox{for } s \in (-\taumax,0],$$
but we do not pose this particular restriction here.

The Cucker-Smale model (and its variants) is in the literature considered
a generic model for flocking, herding or schooling of animals
with positions $x_i=x_i(t)$ and velocities $v_i=v_i(t)$.
If the influence function has a heavy tail, i.e., $\int^\infty \psi(s) = \infty$,
then the model exhibits the so-called  \emph{unconditional flocking}, where \eqref{flocking} holds for every initial configuration, see \cite{CS1, CS2, Tadmor-Ha, Ha-Liu}.
In the opposite case the flocking is \emph{conditional},
i.e., the asymptotic behavior of the system depends on 
the initial configuration.
The Cucker-Smale model with delay was studied in \cite{Liu-Wu, EHS, HasMar, ChoiH1, ChoiH2}.
In particular, in \cite{ChoiH1} the precise form \eqref{eq:CS} with the normalized communication rates \eqref{psi} was considered
(with constant time delay) and asymptotic flocking was proved under a smallness condition on the delay,
related to the decay properties of the influence function $\psi$ and the velocity diameter of the initial datum.
In this paper we significantly improve this result by considering much less restrictive conditions
that are close to optimal. In particular, we prove that if the influence function $\psi=\psi(s)$
decays sufficiently slowly at infinity, then asymptotic flocking takes place for all initial data
and without restriction on the delay length. This can be seen as an analogy of the unconditional flocking
result for the original Cucker-Smale model.

The paper is organized as follows. In Section \ref{sec:mam} we explain the intuitive motivation
for our results, formulate the precise assumptions on the influence function $\psi=\psi(s)$ and delay $\tau=\tau(t)$,
and state our main results. Their proofs for the Hegselmann-Krause model \eqref{eq:HK} are presented in
Section \ref{sec:HK}, and their extension for the Cucker-Smale model \eqref{eq:CS} in Section \ref{sec:CS}.
Finally, in Section \ref{sec:MF} we prove the analogues of the consensus and flocking results
for the mean-field limits of \eqref{eq:HK} and \eqref{eq:CS}.

\section{Motivation, assumptions and main results}\label{sec:mam}
Let us explain the intuitive expectation that {all} solutions of \eqref{eq:HK}
should converge to global consensus as $t\to\infty$, regardless of the length of the delay.
For this sake, we consider the case of two agents, $N=2$, with positions $x_1(t)$, $x_2(t)$,
and constant delay $\tau(t)\equiv \tau$.
Then \eqref{psi} gives $\psi_{12} = \psi_{21} = 1$ and \eqref{eq:HK} reduces to the linear system
\begin{align*} 
\begin{aligned}
   \dot x_1(t) &= x_2(t - \tau) - x_1(t), \\
   \dot x_2(t) &= x_1(t - \tau) - x_2(t).
\end{aligned}
\end{align*}
Defining 
$w:=x_1-x_2$, we have
\(
  \dot w(t) = -w(t - \tau) - w(t). \label{wEq}
\)
Assuming a solution of the form $w(t) = e^{\xi t}$ for some complex $\xi\in\mathbb{C}$,
we obtain the characteristic equation
\[
   \xi = -e^{-\xi\tau} - 1.
\]
A simple inspection reveals that all roots $\xi$ have negative real part,
which implies that all solutions $w(t)$ of \eqref{wEq} tend to zero as $t\to\infty$.
I.e., we have the asymptotic consensus $\lim_{t\to\infty} x_1(t)-x_2(t) = 0$,
for any value of the constant delay $\tau$.

Let us note that the situation is fundamentally different for 
a reaction-type delay in \eqref{eq:HK}, i.e., the system
\(   \label{eq:HKr}
   \dot x_i(t) = \sum_{j=1}^N \psi_{ij}(t) (x_j(t-\tau(t)) - x_i(t-\tau(t))), \qquad i=1,\ldots,N.
\)
Then, considering again $N=2$ and constant delay $\tau(t)\equiv\tau$, we have for $w:=x_1-x_2$,
\[
    \dot w(t) = -2 w(t - \tau).
\]
Nontrivial solutions of this equation exhibit oscillations whenever $2\tau > e^{-1}$
and the amplitude of the oscillations diverges in time if $2\tau > \pi/2$, see, e.g., \cite{Smith}.
The different types of asymptotic behavior of the Hegselmann-Krause system with communication-type delay \eqref{eq:HK}
versus the system with reaction-type delay \eqref{eq:HKr} can be intuitively understood by noting
that the instantaneous negative feedback term $-x_i(t)$ has a stabilizing effect,
while the delay terms typically destabilize, and this effect becomes stronger with longer delays.
Thus, the stabilizing effect of the instantaneous term $-x_i(t)$ in the right-hand side of \eqref{eq:HK}
is stronger than the destabilizing effect of the delay terms $x_j(t-\tau(t))$, regardless of the maximal length of the delay $\taumax$.
In \eqref{eq:HKr} the stabilizing effect is not present, and therefore, for large enough delays, the solutions
may diverge as $t\to\infty$. 

 Let us now formulate the precise assumptions on the influence function and variable delay
 that we adopt throughout the paper.
The influence function $\psi\in C([0,\infty))$ shall satisfy
\( \label{ass:psi}
   0 < \psi(s) \leq 1\qquad \mbox{for all } s\geq 0.
\)
Note that we do not require any monotonicity properties of $\psi$.
For the variable delay function $\tau\in C([0,\infty))$ we pose the assumption
\( \label{ass:tau}
   0 \leq \tau(t) \leq \taumax \qquad \mbox{for all } t\geq 0,
\)
for some fixed $\taumax>0$.
Clearly, the generic case of constant delay $\tau(t)\equiv\taumax$ is included in \eqref{ass:tau}.
Let us stress that both the above assumptions are very minimal.
The global positivity of $\psi$ in \eqref{ass:psi} cannot be further relaxed,
since universal consensus behavior cannot be expected if $\psi$
would be allowed to vanish even pointwise. The upper bound $\psi\leq 1$
can be replaced by any arbitrary positive value due to the scaling invariance of \eqref{psi}.
The variable delay $\tau=\tau(t)$ is allowed to vanish on arbitrary subsets of $[0,\infty)$.

To formulate our main results, let us introduce the spatial diameter $d_x=d_x(t)$ of the agent group,
\(  \label{def:dX}
   d_x(t) := \max_{1 \leq i,j \leq N}|x_i(t) - x_j(t)|.
\)
The \emph{global asymptotic consensus} is then defined as the property
\( \label{def:consensus}
    \lim_{t\to\infty} d_x(t) = 0.
\)
We note that, in general, \eqref{eq:HK} does not conserve the mean value
$\frac{1}{N}\sum_{i=1}^N x_i$. Consequently, the (asymptotic) consensus vector cannot be
inferred from the initial datum in a straightforward way and can be seen as an emergent property of the system.

Our main result regarding the consensus behavior of the Hegselmann-Krause model \eqref{eq:HK} is as follows.

\begin{theorem}\label{thm:HK}
Let $N\geq 3$ and let the assumptions \eqref{ass:psi} on $\psi=\psi(s)$ and \eqref{ass:tau} on $\tau=\tau(t)$ be verified.
Then all solutions of \eqref{eq:HK} reach global asymptotic consensus as defined by \eqref{def:consensus}.
The decay of $d_x=d_x(t)$ to zero is exponential with calculable rate that improves with increasing $N$.
\end{theorem}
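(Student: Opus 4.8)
The plan is to realize the three ingredients advertised in the introduction and then combine them. \textbf{Step 1 (nonexpansivity).} First I would show that the convex hull of $\{x_1(t),\dots,x_N(t)\}$ does not expand in a suitable sense — more precisely, that the quantity $R(t):=\max_i |x_i(t)|$ (or the diameter $d_x$) stays bounded by a constant depending only on the initial data on $[-\taumax,0]$. The point is that $\dot x_i(t) = \sum_j \psi_{ij}(t)\,x_j(t-\tau(t)) - x_i(t)$, and by \eqref{psi:conv} the first term is a convex combination of the delayed positions; hence $|\dot x_i(t)| \le \max_j|x_j(t-\tau(t))| - $ (a term pulling $x_i$ toward the origin), and a standard differential-inequality / Gronwall argument on $\sup_{s\le t}\max_i|x_i(s)|$ gives the uniform bound $\max_i\sup_{t\ge-\taumax}|x_i(t)| =: R_0 < \infty$. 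This also guarantees global existence of solutions.

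\textbf{Step 2 (lower bound on the weights).} Since $\psi$ is continuous and strictly positive, the uniform position bound $R_0$ from Step 1 forces the arguments $|x_j(t-\tau(t))-x_i(t)|$ to lie in the compact interval $[0,2R_0]$, on which $\psi$ attains a minimum $\psi_{\min}:=\min_{[0,2R_0]}\psi>0$. Combined with $\psi\le 1$ in \eqref{ass:psi}, this yields a uniform lower bound $\psi_{ij}(t) \ge \frac{\psi_{\min}}{N-1} =: \phi > 0$ for all $i\ne j$ and all $t\ge 0$.

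\textbf{Step 3 (differential inequality for the diameter).} Here is the heart of the argument. Fix a time $t$ and pick indices $i,k$ realizing the diameter, $d_x(t)=|x_i(t)-x_k(t)|$; set $e$ a unit vector along $x_i(t)-x_k(t)$ and consider $\frac{\d}{\d t}\big(x_i(t)-x_k(t)\big)\cdot e$. Using \eqref{eq:HK} and the convexity relation \eqref{psi:conv}, write $\dot x_i\cdot e = \sum_j\psi_{ij}(x_j(t-\tau)-x_i(t))\cdot e$ and similarly for $k$. Each $x_j(t-\tau(t))$ lies in the convex hull of the agents at time $t-\tau(t)$, whose diameter is $d_x(t-\tau(t))$; I would use the lower bound $\phi$ on the weights to extract a genuinely contractive contribution. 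The expected outcome is a delay differential inequality of Halanay type,
\[
   \tot{}{t} d_x(t) \;\le\; -\alpha\, d_x(t) \;+\; \beta \sup_{t-\taumax\le s\le t} d_x(s),
\]
with positive constants $\alpha,\beta$ satisfying $\alpha > \beta$ — the strict inequality coming precisely from the stabilizing instantaneous term $-x_i(t)$ being "bigger" than the delayed contribution, with the gap widening as $N$ grows (this is where the $N\ge 3$ and the rate-improvement claim enter). Care is needed because $d_x$ is only Lipschitz, not $C^1$; I would work with upper Dini derivatives and the standard fact that at a.e.\ $t$ the derivative of $t\mapsto d_x(t)$ equals the derivative of the smooth quantity $(x_i-x_k)\cdot e$ for a maximizing pair.

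\textbf{Step 4 (Halanay inequality).} Finally, invoke the Gronwall--Halanay lemma: if $\tot{}{t}d_x \le -\alpha d_x + \beta \sup_{[t-\taumax,t]}d_x$ with $\alpha>\beta\ge 0$, then $d_x(t)\le C e^{-\gamma t}$ for some $\gamma>0$ determined as the positive root of $\gamma = \alpha - \beta e^{\gamma\taumax}$; this root exists exactly because $\alpha>\beta$, and crucially it requires \emph{no} smallness of $\taumax$. Tracking the constants $\alpha,\beta$ through their dependence on $\phi$ and $N$ gives the asserted improvement of the rate with increasing $N$, completing the proof.

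\textbf{Main obstacle.} The delicate point is Step 3: establishing the Halanay-type inequality with the \emph{strict} gap $\alpha>\beta$ uniformly in $t$, independently of $\taumax$. One must argue that projecting the delayed convex-combination term onto the diameter direction $e$ cannot fully counteract the instantaneous decay $-d_x(t)$ — the weight lower bound $\phi$ from Step 2 is what makes at least one agent's delayed position sit strictly inside the hull, producing the surplus. Getting the bookkeeping right so that $\alpha-\beta$ is explicitly positive (and increasing in $N$) is the technical crux; everything else is either standard (Gronwall, Halanay) or a routine compactness/continuity argument.
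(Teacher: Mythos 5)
Your outline follows the paper's route step for step: the radius bound of your Step 1 is the paper's Lemma \ref{lem:Rxbound}, the lower bound $\psi_{ij}\ge \underline{\psi}/(N-1)$ of Step 2 is exactly the paper's estimate \eqref{psibound}, and your Step 4 is the paper's Gronwall--Halanay Lemma \ref{lem:GH} (your sup-form inequality is implied by the paper's pointwise delayed form, and the root condition $\gamma=\alpha-\beta e^{\gamma\taumax}$ matches \eqref{ass_C}). However, the step you yourself flag as the "technical crux" -- Step 3, producing the strict gap $\alpha>\beta$ -- is precisely the one you do not carry out, and it is not routine bookkeeping. The difficulty is that the two delayed averages $\sum_{j\neq i}\psi_{ij}\,x_j(t-\tau(t))$ and $\sum_{j\neq k}\psi_{kj}\,x_j(t-\tau(t))$ are convex combinations over \emph{different} index sets, and the naive convex-hull bound only gives $d_x(t-\tau(t))$, i.e.\ $\alpha=\beta$ and no decay; your heuristic that "at least one delayed position sits strictly inside the hull" does not by itself produce a quantitative surplus uniform in $t$.

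The paper closes this gap with a purely geometric convexity lemma (Lemma \ref{lem:geom}): if all coefficients are bounded below by $\mu$, one rewrites each weighted sum as a convex combination of the auxiliary points $\bar x_j^i=\mu\sum_{\ell\neq i,j}x_\ell+(1-(N-2)\mu)\,x_j$, and an explicit computation of pairwise differences, $\bar x_a^i-\bar x_b^k=\mu(x_k-x_i)+(1-(N-1)\mu)(x_a-x_b)$, yields the contraction factor $\left|\sum_{j\neq i}\psi_{ij}\widetilde x_j-\sum_{j\neq k}\psi_{kj}\widetilde x_j\right|\le (1-(N-2)\mu)\,d_x(t-\tau(t))$ with $\mu=\underline{\psi}/(N-1)$. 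This is where $N\ge 3$ and the improvement of the rate in $N$ enter ($a=(N-2)\mu=\frac{N-2}{N-1}\underline{\psi}$), and it is the one genuinely new idea your proposal is missing; the nonsmoothness of $d_x$ that worries you is handled in the paper simply by splitting $[0,\infty)$ into countably many intervals on which the maximizing pair $(i,k)$ is fixed, equivalent to your Dini-derivative remark. So the architecture is right, but without an argument of the type of Lemma \ref{lem:geom} your Step 3 remains an unproven assertion rather than a proof.
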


For the Cucker-Smale-type system \eqref{eq:CS}, we define \emph{asymptotic flocking} as the property
\( \label{flocking}
   \lim_{t\to\infty} d_v(t) = 0, \qquad \sup_{t\geq 0} d_x(t) < \infty,
\)
where the velocity diameter $d_v=d_v(t)$ of the agent group is given by
\[  
   d_v(t) := \max_{1 \leq i,j \leq N}|v_i(t) - v_j(t)|.
\]
We also introduce the velocity radius $R_v=R_v(t)$,
\[ 
   R_v(t) := \max_{1 \leq i \leq N} |v_i(t)|.
\]
Again, we observe that \eqref{eq:CS} does not conserve the global momentum $\frac{1}{N}\sum_{i=1}^N v_i$.
Finally, we introduce the quantity
\(  \label{def:Psi}
   \Psi(r) := \min_{s\in [0,r]} \psi(s).
\)

\begin{theorem}\label{thm:CS}
Let $N\geq 3$ and let the assumptions \eqref{ass:psi} on $\psi=\psi(s)$ and \eqref{ass:tau} on $\tau=\tau(t)$ be verified.
Moreover, assume that there exists $C\in (0,1)$ such that
\(  \label{ass:C}
   1-C = \left( 1- \frac{N-2}{N-1}  \Psi\left(\taumax R_v^0 + d_x^0 + \frac{d_v^0}{C} \right)\right) e^{C\taumax},
\)
with
\(  \label{Rdd0}
   R_v^0 := \max_{s\in [-\taumax,0]} R_v(s), \qquad
   d_x^0 := \max_{s\in [-\taumax,0]} d_x(s), \qquad
   d_v^0 := \max_{s\in [-\taumax,0]} d_v(s).
\)
Then the solution $(x(t),v(t))$ of \eqref{eq:CS} subject to the initial datum \eqref{IC:CS}
exhibits asymptotic flocking as defined by \eqref{flocking}.
Moreover, the decay of the velocity diameter $d_v=d_v(t)$ to zero as $t\to\infty$
is exponential with rate $C$.
\end{theorem}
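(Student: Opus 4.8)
The plan is to use that the velocity block of \eqref{eq:CS} is structurally identical to the Hegselmann–Krause system \eqref{eq:HK} with $v_i$ playing the role of $x_i$, so that the consensus machinery behind Theorem \ref{thm:HK} — nonexpansivity, convexity of the renormalized weights \eqref{psi:conv}, and the Gronwall–Halanay inequality — can be run on the velocity diameter $d_v$. The only genuinely new feature is that the weights $\psi_{ij}$ depend on the \emph{positions}, which drift since $\dot x_i=v_i$; this forces a bootstrap coupling the velocity estimate to a position estimate, and the implicit relation \eqref{ass:C} will turn out to be exactly the self-consistency condition of that loop. First I would record the non-growth of the velocity radius: rewriting the velocity equation as $\dot v_i(t)+v_i(t)=\sum_j\psi_{ij}(t)\,v_j(t-\tau(t))$, whose right-hand side is a convex combination of the $v_j(t-\tau(t))$ by \eqref{psi:conv}, the nonexpansivity argument used for Theorem \ref{thm:HK} yields $R_v(t)\le R_v^0$ for all $t\ge0$. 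Since $\dot x_i=v_i$ this gives $|x_i(t-\tau(t))-x_i(t)|\le\taumax R_v^0$ and $\frac{\d^+}{\d t}d_x(t)\le d_v(t)$, hence for all $i,j$ and $t\ge0$
\begin{align*}
  \bigl|x_j(t-\tau(t))-x_i(t)\bigr|\ &\le\ d_x(t-\tau(t))+\taumax R_v^0\ \le\ d_x^0+\taumax R_v^0+\int_0^t d_v(s)\,\d s .
\end{align*}

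The second step is a conditional Halanay inequality for $d_v$. Fix $C$ as in \eqref{ass:C}, set $r:=\taumax R_v^0+d_x^0+d_v^0/C$ and $\alpha:=\Psi(r)/(N-1)$. On any time interval where the right-hand side above stays $\le r$, \eqref{ass:psi} and \eqref{def:Psi} force $\psi(|x_j(t-\tau(t))-x_i(t)|)\ge\Psi(r)$ while the denominator in \eqref{psi} is $\le N-1$, so $\psi_{ij}(t)\ge\alpha$ for every $j\neq i$. Using $\psi_{ii}=0$ and decomposing $\psi_{kj}=\alpha+\mu_{kj}$ with $\mu_{kj}\ge0$ and $\sum_{j\neq k}\mu_{kj}=1-(N-1)\alpha$, for any pair $k,l$ one obtains
\begin{align*}
  \sum_j\psi_{kj}v_j(t-\tau)-\sum_j\psi_{lj}v_j(t-\tau)\ &=\ \alpha\bigl(v_l(t-\tau)-v_k(t-\tau)\bigr)+\bigl(1-(N-1)\alpha\bigr)(p_k-p_l),
\end{align*}
with $p_k,p_l$ in the convex hull of $\{v_j(t-\tau)\}_j$, so the norm of the left-hand side is at most $(1-(N-2)\alpha)\,d_v(t-\tau(t))$. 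Combining this with the instantaneous term $-(v_k-v_l)$ coming from \eqref{psi:conv} and the envelope estimate for the upper Dini derivative of a finite maximum, a pair realizing $d_v(t)$ gives
\begin{align*}
  \frac{\d^+}{\d t}\,d_v(t)\ &\le\ -\,d_v(t)+\Bigl(1-\tfrac{N-2}{N-1}\,\Psi(r)\Bigr)\sup_{s\in[t-\taumax,\,t]}d_v(s) .
\end{align*}

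Finally I would close the bootstrap. Since $N\ge3$ and $\Psi(r)\le1$, the delayed coefficient $b:=1-\frac{N-2}{N-1}\Psi(r)$ lies in $[0,1)$, so the Gronwall–Halanay lemma of Section \ref{sec:HK} applies with instantaneous rate $1$ and delayed rate $b$; its decay exponent is precisely the solution $C$ of \eqref{ass:C}, and on any interval $[0,T]$ where the previous inequality holds it yields $d_v(t)\le d_v^0 e^{-Ct}$. Let $T^\ast$ be the supremum of $T\ge0$ such that $\int_0^t d_v\le d_v^0/C$ on $[0,T]$; at $t=0$ the first-step bound reads $d_x^0+\taumax R_v^0<r$, so $T^\ast>0$. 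On $[0,T^\ast)$ the first-step bound stays $\le r$, the conditional Halanay inequality applies, $d_v(t)\le d_v^0 e^{-Ct}$, and hence $\int_0^t d_v\le\frac{d_v^0}{C}(1-e^{-Ct})<\frac{d_v^0}{C}$ — a strict inequality which by continuity rules out $T^\ast<\infty$. Therefore $T^\ast=\infty$, $d_v(t)\le d_v^0 e^{-Ct}$ for all $t\ge0$, and $\sup_{t\ge0}d_x(t)\le d_x^0+d_v^0/C<\infty$, which is the asymptotic flocking \eqref{flocking} with exponential rate $C$.

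The step I expect to be the main obstacle is the conditional Halanay inequality: extracting from the \emph{asymmetric} renormalized weights the nontrivial contraction factor $1-\frac{N-2}{N-1}\Psi(r)$ rather than the useless factor $1$ — this is exactly where $N\ge3$ and the vanishing of the diagonal $\psi_{ii}=0$ enter — and, on top of that, making the bootstrap self-consistent, i.e. choosing the radius $r$ so that the decay rate it generates is fast enough to keep the position spread below $r$. The implicit equation \eqref{ass:C} is precisely the fixed-point condition guaranteeing that a compatible $r$ (equivalently, a compatible $C$) exists; when the influence function decays too fast it may have no solution, which is why a slow-decay assumption is unavoidable here.
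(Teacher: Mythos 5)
Your proposal is correct and takes essentially the same route as the paper: velocity-radius nonexpansivity, a continuation argument keeping $\int_0^t d_v(s)\,\d s \le d_v^0/C$ so that the argument of $\Psi$ stays below $\taumax R_v^0 + d_x^0 + d_v^0/C$, the convexity-based contraction factor $1-\frac{N-2}{N-1}\Psi(r)$ for the delayed term, and the Gronwall--Halanay lemma with the rate $C$ fixed by \eqref{ass:C}. The only cosmetic deviations are that you prove the contraction estimate inline via the decomposition $\psi_{kj}=\alpha+\mu_{kj}$ (which is in substance the proof of Lemma~\ref{lem:geom}) and that you relax $d_v(t-\tau(t))$ to $\sup_{s\in[t-\taumax,t]}d_v(s)$ before invoking Halanay, which is unnecessary since your derivation already yields the pointwise delayed form required by Lemma~\ref{lem:GH}.
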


Let us note that the assumption \eqref{ass:C}, although it may seem technical, is in fact very natural
in the context of the Cucker-Smale model. Indeed, for the influence function
\(  \label{psi:CS}
   \psi(s) = \frac{1}{(1+s^2)^\beta} \qquad\mbox{for } s\geq 0,
\)
which was considered in the original works \cite{CS1, CS2},
assumption \eqref{ass:C} is satisfied whenever $\beta < 1/2$,
regardless of the particular values of $R_v^0$, $d_x^0$ and $d_v^0$.
This is precisely the setting that leads to \emph{unconditional flocking} in the original
Cucker-Smale model, and $\beta>1/2$ asymptotic flocking may or may not take place,
depending on the initial datum. From this point of view, our result is very close to optimal.
We formulate it as the following corollary and prove it in Section \ref{sec:CS}.

\begin{corollary}\label{corr:CS}
Consider the influence function given by \eqref{psi:CS} with $\beta<1/2$.
Then all solutions of \eqref{eq:CS} 
exhibit asymptotic flocking as defined by \eqref{flocking}.
\end{corollary}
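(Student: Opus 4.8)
The plan is to deduce the corollary directly from Theorem~\ref{thm:CS}: it suffices to show that, for the influence function \eqref{psi:CS} with $\beta<1/2$, the algebraic condition \eqref{ass:C} can be met for some $C\in(0,1)$, no matter what the nonnegative numbers $R_v^0$, $d_x^0$, $d_v^0$ are. Since $s\mapsto(1+s^2)^{-\beta}$ is nonincreasing, one has $\Psi(r)=(1+r^2)^{-\beta}$, so \eqref{ass:C} is a genuine scalar equation in $C$. Writing $a:=\taumax R_v^0+d_x^0\geq0$ and $b:=d_v^0\geq0$, I would study the continuous function
\[
   F(C):=\left(1-\frac{N-2}{N-1}\,\bigl(1+(a+b/C)^2\bigr)^{-\beta}\right)e^{C\taumax}-(1-C),\qquad C\in(0,1],
\]
and locate a zero of $F$ in $(0,1)$ via the intermediate value theorem; any such $C$ then makes \eqref{ass:C} hold and Theorem~\ref{thm:CS} finishes the proof, with exponential flocking rate $C$.

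Checking the right endpoint is trivial: since $\tfrac{N-2}{N-1}<1$ and $\bigl(1+(a+b)^2\bigr)^{-\beta}\leq1$, the first factor of $F(1)$ is at least $\tfrac{1}{N-1}>0$, hence $F(1)>0$. The substance of the argument lies in the behaviour of $F$ as $C\to0^+$, and this is the only place where $\beta<1/2$ is used. If $b>0$, then $a+b/C\to\infty$ at rate $b/C$, so $\Psi(a+b/C)\sim(C/b)^{2\beta}$; inserting $e^{C\taumax}=1+\taumax C+O(C^2)$ one obtains
\[
   F(C)=-\frac{N-2}{N-1}\,b^{-2\beta}\,C^{2\beta}+(1+\taumax)\,C+O(C^{2}),\qquad C\to0^+,
\]
and because $2\beta<1$ the negative term of order $C^{2\beta}$ dominates the linear term, so $F(C)<0$ for all sufficiently small $C>0$. (If instead $b=0$, the argument of $\Psi$ is the fixed constant $a$ and $F(0^+)=-\tfrac{N-2}{N-1}(1+a^2)^{-\beta}<0$ outright.) Together with $F(1)>0$ this yields the desired root $C\in(0,1)$.

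The step I expect to require the most care is precisely this expansion of $F$ near $C=0$: one has to compare, uniformly in the data $a\geq0$, the $O(C^{2\beta})$ contribution of the renormalized weight $\Psi(a+b/C)$ against the $O(C)$ contribution of $e^{C\taumax}-1$, and it is exactly the ordering $C^{2\beta}\gg C$ — valid iff $\beta<1/2$ — that forces $F$ to become negative for small $C$ and thereby produces the threshold. Everything else (continuity of $F$, the estimate at $C=1$, invoking Theorem~\ref{thm:CS}) is routine.
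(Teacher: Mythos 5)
Your argument is correct and follows essentially the same route as the paper: both reduce the corollary to solvability of \eqref{ass:C}, note $\Psi=\psi$ by monotonicity, check that the right-hand side exceeds $1-C$ at $C=1$ while dropping below it near $C=0^+$ precisely when $\beta<1/2$, and conclude by the intermediate value theorem. Your version is in fact slightly more careful than the paper's (which phrases the small-$C$ behaviour via the one-sided derivative being $<-1$), since you give the explicit expansion and also treat the degenerate case $d_v^0=0$; the only cosmetic slip is that the remainder in your expansion is $O(C^{1+2\beta})$ rather than $O(C^2)$, which does not affect the conclusion.
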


The last part of the paper is devoted to the proof of asymptotic consensus and flocking
in the mean-field limits of the Hegselmann-Krause model \eqref{eq:HK}
and the Cucker-Smale system \eqref{eq:CS}.
Letting $N\to\infty$ in \eqref{eq:HK} leads to the conservation law
\(  \label{mf:HK}
   \partial_t f + \grad_x\cdot (F[f] f) = 0,
\)
for the time-dependent probability measure $f=f(t,x)$
which describes the probability of finding an agent at time $t\geq 0$
located at $x\in\R^d$.
The operator $F=F[f]$ is defined as
\(  \label{F}
   F[f](t,x) := \frac{\int_{\R^d} \psi(|x-y|) (y-x) f(t-\tau(t),y) \d y}{\int_{\R^d} \psi(|x-y|) f(t-\tau(t),y) \d y}.
\)
The system \eqref{mf:HK}--\eqref{F} is equipped with the initial datum
$f(t) = f^0(t)$, $t\in [-\taumax,0]$, with $f^0 \in C([-\taumax,0], \P(\R^d))$,
where $\P(\R^d)$ denotes the set of probability measures on $\R^d$.
We shall assume that the initial datum is uniformly compactly supported,
i.e., 
\(   \label{IC:mf:HK}
   \sup_{s\in [-\taumax,0]} d_x[f^0(s)] < \infty,
\)
where the diameter $d_x[h]$ for a probability measure $h\in \P(\R^d)$ is defined as
\[
   d_x[h] := \sup \{ |x-y|,\, x,y\in \mathrm{supp}\, h \}.
\]
In Section \ref{sec:MF} we shall prove the following theorem,
which is a direct consequence of a stability estimate in terms of the
Monge-Kantorowich-Rubinstein distance, combined with the fact
that the consensus estimates derived in Section \ref{sec:HK} are uniform with respect
to the number of agents $N\in\N$.

\begin{theorem}\label{thm:mf:HK}
Let the assumptions \eqref{ass:psi} on $\psi=\psi(s)$ and \eqref{ass:tau} on $\tau=\tau(t)$ be verified.
Then all solutions $f=f(t)$ of \eqref{mf:HK} with compactly supported initial datum \eqref{IC:mf:HK}
reach global asymptotic consensus in the sense
\[  
   \lim_{t\to\infty} d_x[f(t)] = 0,
\]
and the decay is exponential.
\end{theorem}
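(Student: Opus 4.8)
The plan is to transfer the discrete consensus estimate of Theorem~\ref{thm:HK} to the measure-valued setting by approximating $f$ with empirical measures of the particle system~\eqref{eq:HK}, exploiting -- as announced in the statement -- that the estimates of Section~\ref{sec:HK} are uniform in the number of agents $N$. First I would record that the family $\{f^0(s):s\in[-\taumax,0]\}$, being the continuous image of a compact interval, is compact and tight in $\P(\R^d)$; together with the uniform diameter bound~\eqref{IC:mf:HK} this places all $f^0(s)$ in one fixed ball $\overline{B(0,R_0)}$, on which the Monge--Kantorovich--Rubinstein distance $d_{MKR}$ metrizes weak convergence. Consequently, for each $N\geq 3$ one may choose a trajectory $s\mapsto f^{0,N}(s)$ of $N$-atom empirical measures, depending continuously on $s$, with $\supp f^{0,N}(s)\subseteq\supp f^0(s)$ (hence $d_x[f^{0,N}(s)]\leq d_x[f^0(s)]\leq d_x^0:=\sup_{\sigma}d_x[f^0(\sigma)]$) and with $\eps_N:=\sup_{s\in[-\taumax,0]}d_{MKR}(f^{0,N}(s),f^0(s))\to 0$ as $N\to\infty$; this is a routine consequence of the density of empirical measures and of the compactness of $\{f^0(s)\}_s$.

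Let $f^N=f^N(t)$ be the solution of~\eqref{mf:HK}--\eqref{F} with initial datum $f^{0,N}$. A direct verification shows that $\tfrac1N\sum_{i=1}^N\delta_{x_i^N(t)}$, with $(x_i^N)$ solving the particle system~\eqref{eq:HK} (the self-interaction term, absent from~\eqref{psi}, now being included; this harmless modification does not affect the arguments of Section~\ref{sec:HK}, which use only the convexity identity~\eqref{psi:conv} and positive lower bounds for the weights that remain uniform in $N$), is a measure-valued solution with this datum, hence equals $f^N(t)$ by uniqueness. Therefore $d_x[f^N(t)]=\max_{i,j}|x_i^N(t)-x_j^N(t)|$, and Theorem~\ref{thm:HK} -- whose decay rate improves with $N$, so is bounded below by the value $\lambda>0$ it takes at $N=3$, with an $N$-uniform prefactor -- gives
\begin{equation*}
   d_x[f^N(t)]\ \leq\ \mathcal{C}\,e^{-\lambda t}\,d_x^0\qquad\text{for all }t\geq 0\text{ and all }N\geq 3,
\end{equation*}
with $\mathcal{C}$ independent of $N$.

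Finally I would invoke the stability estimate for~\eqref{mf:HK} in the $d_{MKR}$ metric (proved in Section~\ref{sec:MF}): for every $T>0$ there is a finite $\Phi(T)$, depending only on $T$, $\taumax$ and $R_0$, with $\sup_{t\in[0,T]}d_{MKR}(f^N(t),f(t))\leq\Phi(T)\,\eps_N$. (It is the Gronwall-type growth of $\Phi(T)$ in $T$ that forces us to extract the decay in time from the $N$-uniform discrete estimate rather than from stability alone.) Letting $N\to\infty$ for fixed $t$ gives $f^N(t)\rightharpoonup f(t)$ weakly, and then the elementary fact that weak convergence forces every point of $\supp f(t)$ to be a limit of points of $\supp f^N(t)$ transfers the diameter bound: for $x,y\in\supp f(t)$ there are $x_N,y_N\in\supp f^N(t)$ with $x_N\to x$ and $y_N\to y$, so $|x-y|=\lim_N|x_N-y_N|\leq\mathcal{C}e^{-\lambda t}d_x^0$; taking the supremum over $x,y\in\supp f(t)$ yields $d_x[f(t)]\leq\mathcal{C}e^{-\lambda t}d_x^0$, the asserted exponential decay. (Run with the crude bound $d_x[f^N(t)]\leq d_x^0$, the same argument gives $\sup_{t\geq0}d_x[f(t)]\leq d_x^0<\infty$, so $d_x[f(t)]$ is finite throughout.)

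The point requiring care is this last transfer. The map $h\mapsto d_x[h]$ is \emph{not} continuous for $d_{MKR}$ -- a small-mass atom placed far away barely changes $d_{MKR}$ while it can more than double the diameter -- so one cannot simply pass to the limit in $d_x[f^N(t)]\to d_x[f(t)]$. What rescues the argument is choosing the approximating empirical measures with supports \emph{inside} $\supp f^0$, which keeps their diameters below $d_x^0$, together with the one-sided (lower Kuratowski) inclusion $\supp f(t)\subseteq\liminf_N\supp f^N(t)$, which does survive weak convergence. The remaining ingredients -- the $N$-uniformity of the Section~\ref{sec:HK} estimate and the finite-time $d_{MKR}$-stability of~\eqref{mf:HK} -- are precisely those the theorem statement points to.
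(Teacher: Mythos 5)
Your proof is correct and follows essentially the same route as the paper: empirical approximation of the initial datum, the $N$-uniform exponential consensus estimate of Theorem \ref{thm:HK}, the finite-time $\W_1$-stability of Theorem \ref{thm:stabHK}, and passage to the limit $N\to\infty$ on each fixed $[0,T]$. You are in fact more careful than the paper at two points it treats tersely (the self-interaction term needed for the empirical measure to solve \eqref{mf:HK}--\eqref{F}, and transferring the diameter bound via lower semicontinuity of supports rather than via $\W_1$-continuity of $d_x[\cdot]$), at the mild cost of quoting the cruder $N=3$ decay rate instead of the limiting rate $C$, which still yields the claimed exponential decay.
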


The mean-field limit of the Cucker-Smale system \eqref{eq:CS} is given,
in its strong formulation, by the kinetic equation
\(  \label{mf:CS}
   \partial_t g + v\cdot\grad_x g + \grad_v\cdot (G[g] g) = 0,
\)
with
\(   \label{G}
   G[g](t,x) := \frac{\int_{\R^d}\int_{\R^d} \psi(|x-y|) (w-v) f(t-\tau(t),y,w) \d y\d w}{\int_{\R^d}\int_{\R^d} \psi(|x-y|) f(t-\tau(t),y,w) \d y\d w}.
\)
Here the time-dependent probability measure $g=g(t,x,v)$ describes the probability of finding an agent at time $t\geq 0$
located at $x\in\R^d$ with velocity $v\in\R^d$.
The initial datum $g(t) = g^0(t)$, $t\in [-\taumax,0]$, with $g^0 \in C([-\tau,0], \P(\R^d\times\R^d))$,
is again assumed to be uniformly compactly supported in the sense that there exists a compact set $K\subset \R^d\times\R^d$ such that
\(   \label{IC:mf:CS}
      \mathrm{supp}\, g^0(t) \subset K \qquad\mbox{for all } t\in[-\taumax,0].
\)
With a slight abuse of notation, define the position and velocity diameters of $\mathrm{supp}\, g(t)$ as
\[  
   d_x[g(t)] := \sup \{ |x-y|,\, x,y\in \mathrm{supp}_x\, g(t) \}, \qquad
   d_v[g(t)]:= \sup \{ |v-w|,\, v,w\in \mathrm{supp}_v\, g(t) \},
\]
where $ \mathrm{supp}_x\, g$ and, resp., $ \mathrm{supp}_v\, g$, are projections
of $ \mathrm{supp}\, g$ onto the $x$- and, resp., $v$-variables.
The velocity radius is defined as
\[
   R_v[g(t)] := \sup \{ |w|,\, w\in \mathrm{supp}_v\, g(t) \}.
\]
We then have the following flocking result.

\begin{theorem}\label{thm:mf:CS}
Let the assumptions \eqref{ass:psi} on $\psi=\psi(s)$ and \eqref{ass:tau} on $\tau=\tau(t)$ be verified.
Moreover, assume that there exists $C\in (0,1)$ such that
\(  \label{ass:mf:C}
   1-C = \left( 1- \Psi\left(\taumax R_v^0 + d_x^0 + \frac{d_v^0}{C} \right)\right) e^{C\taumax},
\)
with $\Psi$ defined in \eqref{def:Psi} and
\(  \label{Rdd0}
   R_v^0 := \max_{s\in [-\taumax,0]} R_v[g^0(s)], \qquad
   d_x^0 := \max_{s\in [-\taumax,0]} d_x[g^0(s)], \qquad
   d_v^0 := \max_{s\in [-\taumax,0]} d_v[g^0(s)].
\)
Then the solution of \eqref{mf:CS} subject to the compactly supported initial datum \eqref{IC:mf:CS}
exhibits asymptotic flocking in the sense
\[
   \lim_{t\to\infty} d_v[g(t)] = 0, \qquad \sup_{t\geq 0} d_x[g(t)] < \infty.
\]
Moreover, the decay of the velocity diameter to zero as $t\to\infty$ is exponential with rate $C$.
\end{theorem}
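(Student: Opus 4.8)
The plan is to obtain Theorem~\ref{thm:mf:CS} as the mean-field counterpart of Theorem~\ref{thm:CS}, in exactly the way Theorem~\ref{thm:mf:HK} is derived from Theorem~\ref{thm:HK}: one combines the $N$-particle flocking estimate, which is uniform in $N$, with a stability estimate for the kinetic equation \eqref{mf:CS}--\eqref{G} in the Monge--Kantorovich--Rubinstein distance $\W$. First, approximate the compactly supported datum $g^0$ by empirical measures $g^{0,N}(s)=\frac1N\sum_{i=1}^N\delta_{(x_i^{0,N}(s),v_i^{0,N}(s))}$, $s\in[-\taumax,0]$, whose atoms all lie in the compact set $K$ of \eqref{IC:mf:CS} and with $\sup_{s\in[-\taumax,0]}\W(g^{0,N}(s),g^0(s))\to0$ as $N\to\infty$; such approximations exist by standard quantization of probability measures. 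In particular $R_v[g^{0,N}(s)]\le R_v^0$, $d_x[g^{0,N}(s)]\le d_x^0$ and $d_v[g^{0,N}(s)]\le d_v^0$ for all $s$ and $N$. Because $\Psi$ is non-increasing and $\frac{N-2}{N-1}\uparrow1$, the existence of $C\in(0,1)$ in \eqref{ass:mf:C} guarantees, for all $N$ large enough, a constant $C_N$ satisfying the discrete condition \eqref{ass:C} for the data $g^{0,N}$, and moreover $C_N\to C$; this small fixed-point bookkeeping is precisely where the factor $\frac{N-2}{N-1}$ in \eqref{ass:C} versus the $1$ in \eqref{ass:mf:C} is accounted for.

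Next, let $(x^N(t),v^N(t))$ be the solution of the particle system \eqref{eq:CS} with initial datum $(x^{0,N},v^{0,N})$, and let $g^N(t):=\frac1N\sum_{i=1}^N\delta_{(x_i^N(t),v_i^N(t))}$, which is a measure-valued solution of \eqref{mf:CS}. Theorem~\ref{thm:CS} and its proof supply, uniformly over all sufficiently large $N$, the nonexpansivity bound $R_v[g^N(t)]\le R_v^0$ for all $t\ge0$, the exponential decay $d_v[g^N(t)]\le d_v^0\,e^{-C_Nt}$, and hence $\sup_{t\ge0}d_x[g^N(t)]\le d_x^0+d_v^0/\underline C<\infty$, where $\underline C:=\inf_N C_N>0$. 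Consequently, for each fixed $T>0$ all the measures $g^N(t)$ with $t\in[0,T]$ are supported in a single compact set $K_T\subset\R^d\times\R^d$ independent of $N$.

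The remaining step is a stability estimate: on each finite interval $[0,T]$, a Gronwall argument along the characteristic flow of \eqref{mf:CS} in the metric $\W$ --- using that, for measures supported in $K_T$, the field $G[g]$ is Lipschitz in $(x,v)$ and in its measure argument, the denominator in \eqref{G} being bounded below thanks to the positivity of $\psi$ on $K_T$, and absorbing the delayed argument by integrating the increments over windows of length $\taumax$ --- yields $\sup_{t\in[0,T]}\W(g^N(t),g(t))\le\Phi_T\big(\sup_{s\in[-\taumax,0]}\W(g^{0,N}(s),g^0(s))\big)\to0$ as $N\to\infty$, where $g(t)$ is the (unique) measure-valued solution of \eqref{mf:CS} with datum $g^0$. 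Since $g^N(t)\rightharpoonup g(t)$ narrowly with uniformly compact supports, $\mathrm{supp}\,g(t)$ is contained in the Kuratowski upper limit of $\mathrm{supp}\,g^N(t)$, whence $d_x[g(t)]\le\limsup_{N\to\infty}d_x[g^N(t)]$ and $d_v[g(t)]\le\limsup_{N\to\infty}d_v[g^N(t)]$ for every $t\ge0$. Combining this with the bounds of the previous paragraph gives $\sup_{t\ge0}d_x[g(t)]<\infty$ and $d_v[g(t)]\le d_v^0\,e^{-Ct}\to0$, the rate $C$ surviving because $C_N\to C$; this is precisely the asserted asymptotic flocking with exponential rate $C$.

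I expect the one genuinely technical ingredient to be the MKR stability estimate of the previous paragraph for the \emph{delayed} kinetic equation: one must propagate Lipschitz-in-measure control of $G[g]$ along the flow while keeping the denominator of \eqref{G} bounded away from zero, on a compact set $K_T$ (and with a flow Lipschitz constant) that grows with $T$ --- harmless, since the argument is needed only on finite intervals --- and close the Gronwall loop through the delay term. If Section~\ref{sec:MF} already proves the analogous estimate for \eqref{mf:HK}, its adaptation to \eqref{mf:CS} is routine. An alternative, particle-free route works directly with the characteristic flow $\dot X=V$, $\dot V=G[g](t,X,V)$ of \eqref{mf:CS}: one observes that $V\mapsto G[g](t,X,V)+V$ is a $\psi$-weighted average of velocities over $\mathrm{supp}_v\,g(t-\tau(t))$, hence lies in their convex hull, and then repeats verbatim the nonexpansivity of $R_v$, the boundedness of $d_x$, and the Gronwall--Halanay estimate for $d_v$ from Section~\ref{sec:CS}, with the finite sums $\sum_j\psi_{ij}(\cdot)$ replaced by the probability measures $\frac{\psi(|X-y|)\,g(t-\tau(t),\d y,\d w)}{\int\psi(|X-y'|)\,g(t-\tau(t),\d y',\d w')}$; in this formulation the factor $\frac{N-2}{N-1}$ is simply equal to $1$, which is exactly the difference between \eqref{ass:C} and \eqref{ass:mf:C}.
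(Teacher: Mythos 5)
Your argument is essentially the paper's own proof: the paper disposes of Theorem \ref{thm:mf:CS} in one line as the ``obvious modification'' of the proof of Theorem \ref{thm:mf:HK}, i.e.\ empirical approximation of $g^0$, the uniform-in-$N$ discrete flocking estimate of Theorem \ref{thm:CS} with rates $C_N\to C$ (the $\frac{N-2}{N-1}$ bookkeeping you describe), and the Wasserstein stability of Theorem \ref{thm:stabCS} applied on arbitrary finite time intervals. Your first route matches this exactly (re-deriving the stability estimate is unnecessary, since Theorem \ref{thm:stabCS} is quoted from \cite{ChoiH1}), and the particle-free alternative you sketch at the end is simply a different, also viable, route that the paper does not take.
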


Again, we note that if the influence function is of the form $\psi(s) = (1+s^2)^{-\beta}$,
assumption \eqref{ass:mf:C} is satisfied whenever $\beta<1/2$, regardless of
the values of $\taumax$, $R_v^0$, $d_x^0$ and $d_v^0$.

\section{Asymptotic consensus for the Hegselmann-Krause model \eqref{eq:HK}}\label{sec:HK}
We define the radius of the group $R_x=R_x(t)$ by
\(  \label{def:diam}
   R_x(t) := \max_{1\leq i\leq N} |x_i(t)|.
\)
The following lemma shows that the radius is bounded uniformly in time by the radius of the initial datum,
defined as
\(  \label{def:R0}
   R_x^0 := \max_{t\in [-\taumax,0]} R_x(t).
\)

\begin{lemma}\label{lem:Rxbound}
Let the initial datum $x^0\in C([-\taumax,0];\R^d)^N$
and let $R_x^0$ be given by \eqref{def:R0}.
Then, along the solutions of \eqref{eq:HK}--\eqref{IC:HK}, the diameter $R_x=R_x(t)$ defined in \eqref{def:diam} satisfies
\[
   R_x(t) \leq R_x^0 \qquad\mbox{for all } t\geq 0.
\]
\end{lemma}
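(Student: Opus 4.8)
The plan is to show that the function $t \mapsto R_x(t)$ cannot increase, by examining what happens to the agent that is currently furthest from the origin. The key structural fact is the convexity property \eqref{psi:conv}: since $\sum_{j=1}^N \psi_{ij}(t) = 1$ with all $\psi_{ij}(t) \geq 0$, the term $\sum_j \psi_{ij}(t) x_j(t-\tau(t))$ appearing in \eqref{eq:HK} is a convex combination of the delayed positions $x_j(t-\tau(t))$, hence its Euclidean norm is bounded by $\max_j |x_j(t-\tau(t))| = R_x(t-\tau(t))$.

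First I would set up a differential inequality for $R_x^2$, or more cleanly argue pointwise. Fix $t>0$ and let $i$ be an index achieving $|x_i(t)| = R_x(t)$. Using \eqref{eq:HK} and the Cauchy-Schwarz inequality,
\[
   \frac12 \tot{}{t}|x_i(t)|^2 = x_i(t)\cdot\dot x_i(t)
   = x_i(t)\cdot\Bigl(\sum_{j=1}^N \psi_{ij}(t) x_j(t-\tau(t))\Bigr) - |x_i(t)|^2
   \leq |x_i(t)|\, R_x(t-\tau(t)) - |x_i(t)|^2,
\]
where the bound on the convex-combination term uses $\bigl|\sum_j \psi_{ij}(t) x_j(t-\tau(t))\bigr| \leq \sum_j \psi_{ij}(t)|x_j(t-\tau(t))| \leq R_x(t-\tau(t))$. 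Since $R_x$ is a maximum of finitely many $C^1$ functions it is locally Lipschitz, and at almost every $t$ the above yields, after dividing by $|x_i(t)| = R_x(t)$ (handling the trivial case $R_x(t)=0$ separately),
\[
   \tot{}{t} R_x(t) \leq R_x(t-\tau(t)) - R_x(t).
\]
The technical point worth a sentence is the justification that the upper right Dini derivative of $R_x$ at $t$ is controlled by $\frac{d}{dt}|x_i(t)|^2 / (2R_x(t))$ for a maximizing index $i$; this is the standard envelope/Danskin argument for a max of smooth functions, and alternatively one can avoid it by working with $\frac12\frac{d}{dt}R_x^2 \le R_x\,R_x(t-\tau)-R_x^2$ directly in the comparison below.

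Next I would run a comparison argument on $[0,\infty)$. Set $M := R_x^0 = \max_{s\in[-\taumax,0]} R_x(s)$. I claim $R_x(t) \leq M$ for all $t \geq 0$. Suppose not; then since $R_x$ is continuous and $R_x \leq M$ on $[-\taumax,0]$, there is a first time $t_\star > 0$ with $R_x(t_\star) = M$ and $R_x(t) > M$ for $t$ in some interval $(t_\star, t_\star+\delta)$, with $R_x(t_\star - \tau(t_\star)) \le M$ because $t_\star - \tau(t_\star) < t_\star$. At such a crossing point the inequality gives $\tot{}{t}R_x(t_\star) \le R_x(t_\star-\tau(t_\star)) - R_x(t_\star) \le M - M = 0$, contradicting that $R_x$ exceeds $M$ immediately after $t_\star$. (More carefully, one argues with $R_x(t) \le M + \eps$ for arbitrary $\eps>0$, or uses a standard delay-ODE comparison lemma: if $u' \le u(t-\tau(t)) - u(t)$ and $u \le M$ on $[-\taumax,0]$, then $u \le M$ throughout.) This closes the proof.

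The main obstacle is purely a regularity bookkeeping issue — making the passage from the smooth per-agent estimate to a valid differential inequality for the non-smooth maximum $R_x$ rigorous — rather than anything conceptually deep; the substantive content is entirely the one-line convexity observation \eqref{psi:conv} that the delayed interaction term is a convex combination. I would state the Dini-derivative fact as a small lemma or cite it, then spend the bulk of the argument on the elementary comparison step.
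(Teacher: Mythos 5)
Your proposal rests on exactly the same core observation as the paper -- the convexity property \eqref{psi:conv} plus Cauchy--Schwarz bounds the interaction term by the (delayed) radius -- but the bookkeeping route differs. You differentiate the maximum $R_x$ itself (Dini derivative / Danskin) to get the delay inequality $\tot{}{t}R_x(t)\le R_x(t-\tau(t))-R_x(t)$ and then appeal to a comparison principle for delay inequalities. The paper avoids both of these layers: it fixes $\eps>0$, works on the maximal interval $[0,T)$ on which $R_x<R_x^0+\eps$, bounds the delayed positions there by the \emph{constant} $R_x^0+\eps$ (the delayed argument lies either in $[-\taumax,0]$ or in $[0,T)$), derives the per-agent linear inequality $\tot{}{t}|x_i(t)|\le R_x^0+\eps-|x_i(t)|$, integrates it explicitly to get $|x_i(t)|\le R_x^0+\eps-\eps e^{-t}$, which stays strictly below $R_x^0+\eps$ up to $t=T$, contradicting maximality of $T$, and finally lets $\eps\to0$. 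The paper's version needs no regularity of $R_x$ beyond continuity and no quoted comparison lemma; yours produces the cleaner delay inequality for $R_x$ that mirrors the structure later exploited in Lemma \ref{lem:GH}.

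One concrete caution: the first-crossing step as you literally state it is not a valid contradiction. At the touching time $t_\star$ you only obtain $\tot{}{t}R_x(t_\star)\le 0$ -- and since \eqref{ass:tau} allows $\tau(t_\star)=0$, you cannot upgrade this to strict negativity -- and a nonpositive derivative at a single point does not preclude $R_x>M$ immediately afterwards; moreover, a first crossing with $R_x>M$ on a whole interval $(t_\star,t_\star+\delta)$ need not exist. So the weight of the proof must indeed be carried by the fix you defer to: the comparison lemma for $u'\le u(t-\tau(t))-u(t)$ with $u\le M$ on $[-\taumax,0]$, which is true but requires an integrated argument (e.g., the integrating factor $e^t$: integrating $(e^t u)'\le e^t\sup_{[-\taumax,t^*]}u$ up to the first maximizer $t^*>0$ yields the contradiction), or equivalently the paper's explicit-integration scheme. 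With that step made precise (proved or properly cited), your argument is complete.
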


The proof of a slight generalization of \cite[Lemma 2.2]{ChoiH1} for the case of variable time delay.
We give it here for the sake of the reader.


\begin{proof}
Let us fix some $\eps >0$.
We shall prove that for all $t\geq 0$
\(  \label{R-eps}
   R_x(t) < R_x^0 + \eps.
\)
Obviously, $R_x(0) \leq R_x^0$, so that by continuity,
\eqref{R-eps} holds on the maximal interval $[0,T)$ for some $T>0$.
For contradiction, let us assume that $T<+\infty$.
Then we have
\(   \label{R-cont}
   \lim_{t\to T-}  R_x(t) = R_x^0 + \eps. 
\)
However, for any $i=1,\dots,N$, we have
\[
   \frac12 \tot{}{t} |x_i(t)|^2 &=&  \sum_{j=1}^N \psi_{ij}(t) \left[x_j(t-\tau(t))- x_i(t)\right]\cdot x_i(t) \\
     &=& \sum_{j=1}^N  \psi_{ij}(t) \left[ x_j(t-\tau(t))\cdot x_i(t) - |x_i(t)|^2\right].
\]
By definition, we have $|x_j(t-\tau(t))| < R_x^0 + \eps$ for all $j\in\{1,\dots,N\}$ and $t < T$,
so that with \eqref{psi:conv} and an application of the Cauchy-Schwarz inequality we arrive at
\[
   \frac12 \tot{}{t} |x_i(t)|^2 \leq \left(R_x^0 + \eps\right) |x_i(t)| - |x_i(t)|^2.
\]
Now, if $|x_i(t)| \neq 0$, we use the identity $\frac12\tot{|x_i(t)|^2}{t} = |x_i(t)| \tot{|x_i(t)|}{t}$
and divide the above inequality by $|x_i(t)|$.
On the other hand, if $|x_i(t)| \equiv 0$ on an open subinterval of $(0,T)$,
then $\tot{|x_i(t)|}{t} \equiv 0 \leq R^0_x + \eps  - |x_i(t)|$ on this subinterval.
Thus, we obtain
\[
   \tot{}{t} |x_i(t)| \leq R^0_x  + \eps - |x_i(t)| \quad\mbox{a.e. on } (0,T),
\]
which implies
\[
   |x_i(t)| \leq \left(|x_i(0)| - (R^0_x + \eps) \right)e^{-t} + R^0_x + \eps  \qquad\mbox{for } t  < T.
\]
Consequently, with $|x_i(0)| \leq R_x^0$,
\[
   \lim_{t \to T-} \; \max_{1 \leq i \leq N} |x_i(t)| \leq -\eps e^{-T} + R_x^0 + \eps < R^0_x + \eps,
\]
which is a contradiction to \eqref{R-cont}.
We conclude that, indeed, $T = \infty$,
and complete the proof by taking the limit $\eps\to 0$.
\end{proof}

The following geometric result is based on the convexity
property \eqref{psi:conv} of the renormalized communication weights \eqref{psi}.

\begin{lemma}\label{lem:geom}
Let $N\geq 3$ and $\{x_1, \dots, x_N\}\subset \R^d$ be any set of vectors in $\R^d$
and denote $d_x$ its diameter,
$$d_x:=\max_{1 \leq i,j \leq N}|x_i - x_j|.$$
Fix $i, k \in \{1,2,\cdots,N\}$ such that $i\neq k$ and let
$\eta^i_j \geq 0$ for all $j\in\{1,2,\cdots,N\} \setminus\{i\}$, 
and $\eta^k_j \geq 0$ for all $j\in\{1,2,\cdots,N\} \setminus\{k\}$, 
such that
\[ 
   \sum_{j\neq i} \eta^i_j = 1,\qquad \sum_{j\neq k} \eta^k_j = 1.
\]
Let $\mu\geq 0$ be such that
\[ 
   0 \leq \mu \leq \min \left\{ \min_{j\neq i} \eta^i_j,\; \min_{j\neq k} \eta^k_j \right\}. 
\]
Then
\(  \label{est:geom}
   \left| \sum_{j\neq i} \eta^i_j x_j - \sum_{j\neq k} \eta^k_j x_j \right| \leq (1-(N-2)\mu) d_x.
\)
\end{lemma}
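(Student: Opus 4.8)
The plan is to rewrite the left-hand side of \eqref{est:geom} as the difference of two \emph{probability}-weighted combinations of the $x_j$'s whose common overlap mass cancels, leaving a residual of total weight at most $1-(N-2)\mu$, and then to invoke the elementary fact that the difference of two convex combinations of points in a set of diameter $d_x$ has norm at most $d_x$.

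First I would record that auxiliary fact: if $(p_a)_{a=1}^N$ and $(q_a)_{a=1}^N$ are probability vectors, then, since $\sum_b q_b=\sum_a p_a=1$,
$$ \sum_{a=1}^N p_a x_a-\sum_{b=1}^N q_b x_b=\sum_{a,b} p_a q_b\,(x_a-x_b), $$
so that $\bigl|\sum_a p_a x_a-\sum_b q_b x_b\bigr|\le d_x\sum_{a,b}p_a q_b=d_x$.

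Next comes the cancellation step. For every $j\in\{1,\dots,N\}\setminus\{i,k\}$ both $\eta^i_j$ and $\eta^k_j$ are defined and both are $\ge\mu$ by the hypothesis on $\mu$; set $m_j:=\min\{\eta^i_j,\eta^k_j\}\ge\mu$ and $M:=\sum_{j\neq i,k} m_j$, so that $M\ge(N-2)\mu$ (this is exactly where the improvement factor originates; here $N\ge 3$ guarantees the index set is nonempty). Using $\{1,\dots,N\}\setminus\{i\}=(\{1,\dots,N\}\setminus\{i,k\})\cup\{k\}$, I split
$$ \sum_{j\neq i}\eta^i_j x_j=\sum_{j\neq i,k} m_j x_j+\Bigl(\sum_{j\neq i,k}(\eta^i_j-m_j)\,x_j+\eta^i_k x_k\Bigr),\qquad \sum_{j\neq k}\eta^k_j x_j=\sum_{j\neq i,k} m_j x_j+\Bigl(\sum_{j\neq i,k}(\eta^k_j-m_j)\,x_j+\eta^k_i x_i\Bigr). $$
Subtracting, the block $\sum_{j\neq i,k} m_j x_j$ cancels, leaving $\sum_{j\neq i}\eta^i_j x_j-\sum_{j\neq k}\eta^k_j x_j=\sum_a\alpha_a x_a-\sum_a\beta_a x_a$, where $\alpha$ has entries $\eta^i_j-m_j\ge 0$ for $j\neq i,k$, $\alpha_k=\eta^i_k\ge 0$, $\alpha_i=0$, and analogously for $\beta$ with the roles of $i$ and $k$ interchanged. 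A direct count gives $\sum_a\alpha_a=\sum_{j\neq i}\eta^i_j-M=1-M=:\rho$ and likewise $\sum_a\beta_a=1-M=\rho\ge 0$.

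Finally I would conclude: if $\rho=0$ then both residual sums vanish and \eqref{est:geom} holds trivially (its right-hand side is $\ge 0$); if $\rho>0$, then $\alpha/\rho$ and $\beta/\rho$ are probability vectors, so the auxiliary fact yields
$$ \left|\sum_{j\neq i}\eta^i_j x_j-\sum_{j\neq k}\eta^k_j x_j\right| =\rho\left|\sum_a\tfrac{\alpha_a}{\rho}x_a-\sum_a\tfrac{\beta_a}{\rho}x_a\right| \le\rho\,d_x=(1-M)d_x\le(1-(N-2)\mu)\,d_x. $$
There is no genuine obstacle; the only thing needing care is the bookkeeping of the two index sets $\{1,\dots,N\}\setminus\{i\}$ and $\{1,\dots,N\}\setminus\{k\}$ and the verification that the residual weight vectors $\alpha,\beta$ are nonnegative with the same total mass $1-M$. (One may also phrase the whole argument via the maximal coupling of $\alpha$ and $\beta$ and the identity $1-\|\alpha-\beta\|_{TV}=\sum_a\min\{\alpha_a,\beta_a\}$, but the elementary splitting above is self-contained.)
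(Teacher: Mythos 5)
Your proof is correct, but it follows a genuinely different route from the paper. The paper rewrites each weighted sum as a convex combination of auxiliary points $\bar x^i_j := \mu\sum_{\ell\neq i,j} x_\ell + (1-(N-2)\mu)x_j$ (and analogously $\bar x^k_j$), reduces the estimate to bounding $|\bar x^i_a - \bar x^k_b|$ for any pair of such points, and concludes from the explicit identity $\bar x^i_a - \bar x^k_b = \mu(x_k - x_i) + (1-(N-1)\mu)(x_a - x_b)$ together with the triangle inequality. You instead cancel the common overlap mass $m_j=\min\{\eta^i_j,\eta^k_j\}$ over $j\neq i,k$ (a maximal-coupling / total-variation argument), normalize the residual weight vectors, and invoke the elementary fact that the difference of two convex combinations of points in a set of diameter $d_x$ has norm at most $d_x$. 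Your bookkeeping is right: the residuals $\alpha,\beta$ are nonnegative, each has total mass $\rho=1-M$ with $M=\sum_{j\neq i,k}m_j\geq(N-2)\mu$, and the degenerate case $\rho=0$ is handled correctly (the right-hand side is indeed nonnegative since $M\leq 1$). What your route buys is a marginally sharper estimate, $(1-M)d_x$ rather than $(1-(N-2)\mu)d_x$, and it avoids dividing by $1-(N-1)\mu$, so it needs no implicit restriction in the boundary case $\mu=1/(N-1)$ where the paper's normalization of the coefficients $\lambda^i_j$ degenerates; what the paper's route buys is a transparent geometric picture (the original combinations are convex combinations of ``shrunk'' points whose mutual distances are already contracted), which is the convexity mechanism the author emphasizes. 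Either argument yields the bound used later with $\mu=\underline{\psi}/(N-1)$.
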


\begin{proof}
For $j,\ell \in\{1,2,\cdots,N\} \setminus\{i\}$ we set $\xi^i_{j\ell}:=\mu$ if $\ell\neq j$ and $\xi^i_{jj}:=1 - (N-2)\mu$.
Moreover, define
\(  \label{xji}
   \bar x_j^i := \sum_{\ell\neq i} \xi^i_{j\ell} x_\ell \qquad\mbox{for } j \in \{1,2,\cdots,N\} \setminus\{i\}.
\)
Then the vector $\sum_{j\neq i} \eta^i_j x_j$ can be written as the convex combination of the vectors $\bar x_j^i$,
\(   \label{cc1}
   \sum_{j\neq i} \eta^i_j x_j = \sum_{j\neq i} \lambda_j^i \bar x_j^i
\)
with the coefficients
\[
   \lambda_j^i = \frac{\eta^i_j - \mu}{1-(N-1)\mu} \geq 0, \qquad  \sum_{j\neq i} \eta^i_j = 1.
\]
Similarly, for $j,\ell \in\{1,2,\cdots,N\} \setminus\{k\}$ we set $\xi^k_{j\ell}:=\mu$ if $\ell\neq j$ and $\xi^k_{jj}:=1 - (N-2)\mu$.
Define
\(  \label{xjk}
   \bar x_j^k := \sum_{\ell\neq k} \xi^k_{j\ell} x_\ell \qquad\mbox{for } j \in \{1,2,\cdots,N\} \setminus\{k\},
\)
and again observe that the vector $\sum_{j\neq k} \eta^k_j x_j$ can be written as the convex combination of the vectors $\bar x_j^k$,
\(   \label{cc2}
   \sum_{j\neq k} \eta^k_j x_j = \sum_{j\neq k} \lambda_j^k \bar x_j^k
\)
with the coefficients
\[
   \lambda_j^k = \frac{\eta^k_j - \mu}{1-(N-1)\mu} \geq 0, \qquad  \sum_{j\neq k} \lambda^k_j = 1.
\]
Obviously, due to \eqref{cc1} and \eqref{cc2}, the estimate \eqref{est:geom} is verified as soon as we prove
\[
   \left|\bar x_a^i - \bar x_b^k\right| \leq (1-(N-2)\mu) d_x
\]
for all $a\in\{1,2,\cdots,N\} \setminus\{i\}$ and $b\in\{1,2,\cdots,N\} \setminus\{k\}$.
With \eqref{xji} and \eqref{xjk} we have
\[
     \bar x_a^i - \bar x_b^k &=& \sum_{\ell\neq i} \xi^i_{a\ell} x_\ell - \sum_{\ell\neq k} \xi^k_{b\ell} x_\ell \\
       &=& \mu \sum_{\ell\neq i, a} x_\ell + (1-(N-2)\mu) x_a - \mu \sum_{\ell\neq k, b} x_\ell - (1-(N-2)\mu) x_b \\
       &=& \mu (x_k-x_i) + (1-(N-1)\mu) (x_a-x_b).
\]
With the triangle inequality we then readily obtain
\[
    \left| \bar x_a^i - \bar x_b^k \right| \leq (1-(N-2)\mu) d_x.
\]
\end{proof}

The following Gronwall-Halanay type inequality is a generalization of
\cite[Lemma 2.5]{ChoiH1} for variable time delay.

\begin{lemma}\label{lem:GH}
Let $\tau=\tau(t)$ satisfy the assumptions \eqref{ass:tau}.
Let $u\in C([-\taumax,\infty))$ be a nonnegative continuous function
with piecewise continuous derivative on $(0,\infty)$, such that
for some constant $a \in (0,1)$ the differential inequality is satisfied,
\begin{equation}\label{diff_ineq}
   \tot{}{t}  u(t) \leq (1-a) u(t -\tau(t)) - u(t) \qquad\mbox{for almost all } t>0.
\end{equation}
Then there exists a unique solution $C\in (0,a)$ of the equation
\begin{equation}\label{ass_C}
  1 - C = (1-a)e^{C\taumax}
\end{equation}
and the estimate holds
\( \label{Gronwall-like}
   u(t) \leq \left( \max_{s\in[-\taumax,0]} u(s) \right) e^{-Ct} \qquad \mbox{for all } t \geq 0.
\)
\end{lemma}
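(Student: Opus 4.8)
The plan is to analyze \eqref{ass_C} first and then run a Gronwall-type comparison argument. For the existence/uniqueness of $C\in(0,a)$, I would study the function $\Phi(C) := (1-a)e^{C\taumax} - 1 + C$ on the interval $[0,a]$. At $C=0$ we get $\Phi(0) = (1-a) - 1 = -a < 0$, and at $C=a$ we get $\Phi(a) = (1-a)e^{a\taumax} - 1 + a$; since $e^{a\taumax} > 1$ we have $(1-a)e^{a\taumax} > 1-a$, hence $\Phi(a) > (1-a) - 1 + a = 0$. Since $\Phi$ is continuous and strictly increasing (its derivative $(1-a)\taumax e^{C\taumax} + 1$ is positive), there is a unique root $C\in(0,a)$. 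That disposes of the first claim.

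For the decay estimate \eqref{Gronwall-like}, let $M := \max_{s\in[-\taumax,0]} u(s)$ and set the comparison function $y(t) := M e^{-Ct}$. The strategy is to show $u(t) \le y(t)$ for all $t\ge 0$ by a continuity/contradiction argument analogous to the one in Lemma \ref{lem:Rxbound}: fix $\eps>0$, claim $u(t) < (M+\eps)e^{-Ct}$ for all $t\ge 0$, note it holds at $t=0$ and on a maximal interval $[0,T)$, assume $T<\infty$ so that $u(T) = (M+\eps)e^{-CT}$ while $u(t) < (M+\eps)e^{-Ct}$ for $t<T$. The key computation: at $t=T$ (or approaching it), using \eqref{diff_ineq} and the bound $u(T-\tau(T)) < (M+\eps)e^{-C(T-\tau(T))} \le (M+\eps)e^{-CT}e^{C\taumax}$ (valid since $T-\tau(T)$ is either in $[-\taumax,0]$, where $u \le M < (M+\eps)e^{C(T-\tau(T))}$ trivially after checking signs, or in $[0,T)$ where the strict bound applies),
\begin{equation*}
   \tot{}{t}u(T) \le (1-a)(M+\eps)e^{-CT}e^{C\taumax} - (M+\eps)e^{-CT} = (M+\eps)e^{-CT}\bigl[(1-a)e^{C\taumax} - 1\bigr] = -C(M+\eps)e^{-CT},
\end{equation*}
where the last equality is exactly \eqref{ass_C}. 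But $\tot{}{t}\bigl[(M+\eps)e^{-Ct}\bigr] = -C(M+\eps)e^{-Ct}$, so at $T$ the function $u$ has derivative no larger than that of the barrier $(M+\eps)e^{-Ct}$ while touching it from below — this contradicts the definition of $T$ as the first crossing time (one must phrase this carefully, e.g. comparing the function $u(t) - (M+\eps)e^{-Ct}$, which is negative on $[0,T)$, zero at $T$, hence has nonnegative left derivative at $T$, forcing $\tot{}{t}u(T) \ge -C(M+\eps)e^{-CT}$, and combined with the strict considerations yields the contradiction). Hence $T=\infty$, and letting $\eps\to 0$ gives \eqref{Gronwall-like}.

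The main obstacle is handling the interplay between the delayed argument $t-\tau(t)$ possibly falling in the initial-data window $[-\taumax,0]$ versus the already-controlled region $[0,t)$, and making the "touching from below" contradiction rigorous given that $u$ only has a piecewise continuous derivative (so one should work with one-sided derivatives, or equivalently phrase everything through the integral inequality). A clean way to sidestep the derivative subtlety: define $v(t) := u(t)e^{Ct}$, rewrite \eqref{diff_ineq} as a differential inequality for $v$, and show $v$ stays bounded by $M$ via the same barrier argument; the factor $e^{C\taumax}$ emerging from $e^{C(t-(t-\tau(t)))} = e^{C\tau(t)} \le e^{C\taumax}$ is precisely what \eqref{ass_C} is calibrated to absorb. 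I would also note explicitly that $C<a<1$ guarantees $(1-a)e^{C\taumax} - 1 = -C < 0$, so the barrier is genuinely decaying, consistent with the asserted exponential rate.
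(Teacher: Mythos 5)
Your proposal is correct and takes essentially the same route as the paper's proof: a comparison against the barrier $\bigl(\max_{s\in[-\taumax,0]}u(s)\bigr)e^{-Ct}$ (inflated additively by $\eps$ where the paper inflates multiplicatively by a factor $\lambda>1$), with the delayed argument split according to whether $t-\tau(t)$ lies in the initial window or in the already-controlled region, the calibration $(1-a)e^{C\taumax}-1=-C$ closing the contradiction, and the a.e./piecewise-differentiability subtlety handled via one-sided derivatives or the integral form, where the paper instead passes to a differentiability point just beyond the crossing time. The only case your split omits is $\tau(T)=0$, i.e.\ $T-\tau(T)=T$ (which the paper treats separately); it is harmless because the inequality then yields a derivative bound $-a\,u(T)<-C\,u(T)$ since $C<a$, and as a bonus you supply the existence/uniqueness argument for $C$, which the paper asserts without proof.
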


\begin{proof}
We denote
\(   \label{def:baru}
    \bar u := \max_{s\in[-\taumax,0]} u(s), \qquad w(t) := \bar u e^{-Ct},
\)
and for any fixed $\lambda > 1$ set
\[
   \mathcal{S}_\lambda := \left\{ t \geq 0 : u(s) \leq \lambda w(s) \quad \mbox{for} \quad s \in [0,t)\right\}.
\]
Since $0 \in \mathcal{S}_\lambda$, $T_\lambda := \sup \mathcal{S}_\lambda \geq 0$ exists. We claim that
\[
   T_\lambda = \infty \qquad \mbox{for any } \lambda >1.
\]
For contradiction, assume $T_\lambda < \infty$ for some $\lambda >1$.
Then clearly $\tau(T_\lambda) > 0$, since otherwise we would have
\[
   \tot{}{t+}  u(t) \leq \lim_{t\to T_\lambda+} (1-a) u(t -\tau(t)) - u(t) = - a u(T_\lambda) = - a \lambda w(T_\lambda) < 0,
\]
which contradicts the definition of $T_\lambda$.
Therefore, due to the continuity of $u=u(t)$ and $\tau=\tau(t)$, there exists some $T_\lambda^* > T_\lambda$ such that
$u$ is differentiable at $T_\lambda^*$ and
\(  \label{est_derivatives}
   u(T_\lambda^*) > \lambda w(T_\lambda^*),\qquad   \tot{}{t} u(T_\lambda^*) > \lambda \tot{}{t} w(T_\lambda^*),\qquad
   \tau(T_\lambda^*) > T_\lambda^*-T_\lambda.
\)
Note that $w$ satisfies
\begin{equation}\label{est_g}
   w(t-\tau(t)) = e^{C\tau(t)}w(t) \quad \mbox{and} \quad \tot{}{t} w(t) = - C w(t),
\end{equation}
for all $t>0$.
Moreover, it follows from \eqref{diff_ineq} that
\begin{equation}\label{est_diff}
   \tot{}{t} u(T_\lambda^*) \leq (1 - a) u(T_\lambda^* - \tau(T_\lambda^*)) - u(T_\lambda^*).
\end{equation}
We now consider the following two cases:
\begin{itemize}
\item
If $T_\lambda^* - \tau(T_\lambda^*) \leq 0$, then by \eqref{def:baru} we have $u(T_\lambda^* - \tau(T_\lambda^*)) \leq \bar u$
and $u(T_\lambda^*) > \lambda w(T_\lambda^*)$ by \eqref{est_derivatives},
so that we estimate the right-hand side of \eqref{est_diff} by
$$
\begin{aligned}
    \tot{}{t} u(T_\lambda^*)  &\leq (1-a) \bar u - u(T_\lambda^*)\cr
   &< (1-a) \lambda w(0) - \lambda w(T_\lambda^*)\cr
   &\leq \left((1 - a)e^{C\taumax} - 1\right) \lambda w(T_\lambda^*).
\end{aligned}
$$
For the third line we used the inequality $w(0) \leq w(T_\lambda^*)e^{C\taumax}$
implied by $T_\lambda^* \leq \tau(T_\lambda^*) \leq \taumax$.
With the identities \eqref{ass_C} and \eqref{est_g}, we obtain
$$
   \tot{}{t} u(T_\lambda^*)  < - C \lambda w(T_\lambda^*)  = \lambda \tot{}{t} w(T_\lambda^*),
$$
which is a contradiction to \eqref{est_derivatives}.

\item
If $T_\lambda^* - \tau(T_\lambda^*) > 0$, we have $u(T_\lambda^* - \tau(T_\lambda^*)) \leq \lambda w(T_\lambda^* - \tau(T_\lambda^*))$
since, due to \eqref{est_derivatives}, $T_\lambda^* - \tau(T_\lambda^*) < T_\lambda$.
Then \eqref{est_diff} gives
\[
   \tot{}{t} u(T_\lambda^*) &\leq& (1-a) \lambda w(T_\lambda^* - \tau(T_\lambda^*)) - \lambda w(T_\lambda^*)\\
     &=& \left((1 - a)e^{C\tau(T_\lambda^*)} - 1\right) \lambda w(T_\lambda^*) \\
     &\leq& \left((1 - a)e^{C\taumax} - 1\right) \lambda w(T_\lambda^*).
\]
Then, using the same argument as in the previous case, we obtain a contradiction to \eqref{est_derivatives}.
\end{itemize}
We conclude that, for every $\lambda>1$, $T_\lambda = \infty$ and $u(t) \leq \lambda w(t)$ for all $t\geq 0$.
Passing to the limit $\lambda \to 1$ yields the claim \eqref{Gronwall-like}.
\end{proof}

We are now prepared to prove Theorem \ref{thm:HK}.

\begin{proof}
For the sake of legibility, let us introduce the shorthand notation $\widetilde x_j := x_j(t-\tau(t))$,
while $x_j$ means $x_j(t)$.

The uniform bound on the radius $R_x=R_x(t)$ of the solution provided by Lemma \ref{lem:Rxbound} gives
for all $i,j\in \{1,2,\cdots,N\}$,
\[
   |\widetilde x_j - x_i| \leq |\widetilde x_j| + |\widetilde x_i| \leq 2R_x^0 \qquad\mbox{for all } t\geq 0.
\]
Consequently, defining
\(  \label{def:upsi}
    \underline{\psi} := \min_{s\in[0,2R_x^0]} \psi(s),
\)
we have $\psi(|\widetilde x_j - x_i|) \geq \underline{\psi}$, and
\(  \label{psibound}
   \psi_{ij} = \frac{\psi(|\widetilde x_j - x_i|)}{\sum_{\ell\neq i} \psi(|\widetilde x_\ell - x_i|)} \geq \frac{\underline{\psi}}{N-1}.
\)
Note that due to the assumption \eqref{ass:psi}, we have $\underline{\psi} >0$.

Due to the continuity of the trajectories $x_i=x_i(t)$,
there is an at most countable system of open, mutually disjoint
intervals $\{\mathcal{I}_\sigma\}_{\sigma\in\N}$ such that
$$
   \bigcup_{\sigma\in\N} \overline{\mathcal{I}_\sigma} = [0,\infty)
$$
and for each ${\sigma\in\N}$ there exist indices $i(\sigma)$, $k(\sigma)$
such that
$$
   d_x(t) = |x_{i(\sigma)}(t) - x_{k(\sigma)}(t)| \quad\mbox{for } t\in \mathcal{I}_\sigma.
$$
Then, using the abbreviated notation $i:=i(\sigma)$, $k:=k(\sigma)$,
we have for every $t\in \mathcal{I}_\sigma$,
\[
   \frac12 \tot{}{t} d_x(t)^2 &=& (\dot x_i - \dot x_k)\cdot (x_i-x_k)\\
      &=& 
       \left(\sum_{j\neq i} \psi_{ij} (\widetilde x_j - x_i) - \sum_{j\neq k} \psi_{kj} (\widetilde x_j - x_k) \right) \cdot (x_i-x_k)\\
      &=&
      \left( \sum_{j\neq i} \psi_{ij} \widetilde x_j  - \sum_{j\neq k} \psi_{kj} \widetilde x_j \right) \cdot (x_i-x_k) - |x_i-x_k|^2,
\]
where we used the convexity property of the renormalized weights \eqref{psi:conv}.
We now use \eqref{psibound} and Lemma \ref{lem:geom} with $\mu:=\frac{\underline{\psi}}{N-1}$, which gives
\[
   \left| \sum_{j\neq i} \psi_{ij} \widetilde x_j  - \sum_{j\neq k} \psi_{kj} \widetilde x_j \right| \leq (1-(N-2)\mu) d_x(t-\tau(t)).
\]
Consequently, with the Cauchy-Schwartz inequality we have
\[
   \frac12 \tot{}{t} d_x(t)^2 \leq (1-(N-2)\mu) d_x(t-\tau) d_x(t) - d_x(t)^2,
\]
which implies that for almost all $t>0$,
\[
   \tot{}{t} d_x(t) \leq (1-(N-2)\mu) d_x(t-\tau(t)) - d_x(t).
\]
An application of Lemma \ref{lem:GH} with $a:=(N-2)\mu = \frac{N-2}{N-1}\underline{\psi}  \in (0,1)$ gives then
the exponential decay
\[
   d_x(t) \leq \left( \max_{s\in[-\taumax,0]} d_x(s) \right) e^{-Ct} \qquad \mbox{for } t \geq 0,
\]
where $C$ is the unique solution of \eqref{ass_C}.
We note that $a$ increases with increasing $N$ (if $\underline{\psi}$ is held constant), and so does $C$. 
Consequently, the exponential decay rate $C$ improves with increasing $N$.
\end{proof}

\section{Asymptotic flocking for the Cucker-Smale model}\label{sec:CS}
The method develop in Section \ref{sec:HK} can be easily extended for
the Cucker-Smale model \eqref{eq:CS}, as we demonstrate in the proof of Theorem \ref{thm:CS} below.

\begin{proof}
First, note that the proof of Lemma \ref{lem:Rxbound} applies
mutatis mutandis for the velocity variable in \eqref{eq:CS}, providing
the uniform bound
\( \label{Rvbound}
   R_v(t) := \max_{1\leq i\leq N} |v_i(t)| \leq R_v^0 \qquad\mbox{for all } t\geq 0,
\)
with $R_v^0 := \max_{t\in [-\taumax,0]} R_v(t)$.

Let $C\in (0,1)$ be given by \eqref{ass:C}.
With $d_v^0>0$ given by \eqref{Rdd0} and due to the continuity of $d_v=d_v(t)$, there exists some $T>0$ such that
\(   \label{CSbound}
   \int_0^t d_v(s) \d s < \frac{d_v^0}{C} \qquad \mbox{for all } t<T.
\)
We claim that $T=\infty$. For contradiction, assume that \eqref{CSbound} holds only until some finite $T>0$.
Then we have
\(  \label{forContr}
   \int_0^T d_v(s) \d s = \frac{d_v^0}{C}. 
\)
By the first equation of \eqref{eq:CS} we readily have
\[
   d_x(t) \leq d_x^0 + \int_0^t d_v(s) \d s,
\]
so that \eqref{CSbound} implies for all $t<T$,
\[
   d_x(t) \leq d_x^0 + \int_0^t d_v(s) \d s < d_x^0 + \frac{d_v^0}{C}.
\]
Moreover, using the estimate
\[
   |\widetilde x_j - x_j| = \left| \int_{t-\tau(t)}^t \dot x_j(s) \d s \right| \leq \int_{t-\tau(t)}^t |v_j(s)| \d s \leq \taumax R_v^0,
\]
provided by \eqref{Rvbound}, we have for any $i, j \in \{1,2,\cdots,N\}$, $i\neq j$,
\[
   |\widetilde x_j - x_i| \leq |\widetilde x_j - x_j| + |x_j-x_i| \leq \taumax R_v^0 + d_x(t) \leq \taumax R_v^0 + d_x^0 + \frac{d_v^0}{C}.
\]
Then by the definition \eqref{def:Psi} of $\Psi$,
\[
   \psi(|\widetilde x_j - x_i|) \geq \Psi\left( \taumax R_v^0 + d_x^0 + \frac{d_v^0}{C} \right),  
\]
and by the universal bound $\psi\leq 1$,
\(  \label{psiboundCS}
   \psi_{ij} = \frac{\psi(|\widetilde x_j - x_i|)}{\sum_{\ell\neq i} \psi(|\widetilde x_\ell - x_i|)} \geq \frac{1}{N-1} \Psi\left( \taumax R_v^0 + d_x^0 + \frac{d_v^0}{C} \right).
\)
Similarly as in the proof of Theorem \ref{thm:HK}, we have for $t<T$ such that $d_v=|v_i-v_k|$ on some neighborhood of $t$,
\(   \nonumber
   \frac12 \tot{}{t} d_v(t)^2 &=& \left( \sum_{j\neq i} \psi_{ij} \widetilde v_j  - \sum_{j\neq k} \psi_{kj} \widetilde v_j \right) \cdot (v_i-v_k) - |v_i-v_k|^2 \\
     &\leq& \left| \sum_{j\neq i} \psi_{ij} \widetilde v_j  - \sum_{j\neq k} \psi_{kj} \widetilde v_j \right| d_v(t) - d_v(t)^2.
     \label{ddtdv}
\)
We now use \eqref{psiboundCS} and Lemma \ref{lem:geom} with $\mu:=\frac{1}{N-1}\Psi\left( \taumax R_v^0 + d_x^0 + \frac{d_v^0}{C} \right)$, which gives
\[
   \left| \sum_{j\neq i} \psi_{ij} \widetilde v_j  - \sum_{j\neq k} \psi_{kj} \widetilde v_j \right| \leq (1-(N-2)\mu) d_v(t-\tau(t)).
\]
Consequently, \eqref{ddtdv} implies that for almost all $t>0$,
\[
   \tot{}{t} d_v(t) \leq (1-(N-2)\mu) d_v(t-\tau(t)) - d_v(t).
\]
An application of Lemma \ref{lem:GH} with
\[
   a := (N-2)\mu = \frac{N-2}{N-1} \psi\left( \taumax R_v^0 + d_x^0 + \frac{d_v^0}{C} \right),
\]
recalling \eqref{ass:C}, gives
\(  \label{decay:dv}
   d_v(t) \leq d_v^0 e^{-Ct} \qquad\mbox{for all } t<T.
\)
But then
\[
   \int_0^T d_v(s) \d s \leq d_v^0 \int_0^T  e^{-Cs} \d s = \frac{d_v^0}{C} \left( 1 - e^{-CT} \right) < \frac{d_v^0}{C},
\]
which is a contradiction to \eqref{forContr}.
Thus, we conclude that $T=\infty$, i.e., that \eqref{CSbound} holds for all $t>0$.
Then also \eqref{decay:dv} holds for all $t>0$, and, moreover,
$d_x=d_x(t)$ is uniformly bounded by $d_x^0 + \frac{d_v^0}{C}$.
\end{proof}

Finally, we provide the proof of Corollary \ref{corr:CS}. Considering the monotone communication rate function
\[
   \psi(s) = \frac{1}{(1+s^2)^\beta}
\]
with $\beta\geq 0$, we obviously have $\Psi(r) = \psi(r)$ for all $r\geq 0$, with $\Psi(r)$ defined in \eqref{def:Psi}.
Then it is straightforward to calculate
\[
   \lim_{C\to 0+} \left( 1- \frac{N-2}{N-1}\Psi\left(\taumax R_v^0 + d_x^0 + \frac{d_v^0}{C} \right)\right) e^{C\taumax} = 1,
\]
for any positive values of $R_v^0$, $d_x^0$, $d_v^0$ and $\taumax$.
On the other hand, the above expression is strictly positive for $C=1$.
Therefore, \eqref{ass:C} is solvable with some $C\in(0,1)$ as soon as
\[
   \tot{}{C} \left[ \left( 1- \frac{N-2}{N-1}\psi\left(\taumax R_v^0 + d_x^0 + \frac{d_v^0}{C} \right)\right) e^{C\taumax} \right]_{C=0+} < -1.
\]
A simple calculation reveals that this is the case if $\beta<1/2$.

\section{Consensus and flocking in the mean-field limit}\label{sec:MF}
Our results for the mean-field limit systems \eqref{mf:HK}--\eqref{F} and \eqref{mf:CS}--\eqref{G}
are based on the well-posedness theory in measures developed in \cite[Section 3]{ChoiH1}.
In particular, existence and uniqueness of measure-valued solutions for the Cucker-Smale
system \eqref{mf:CS}--\eqref{G} was proved there, together with continuous dependence
on the initial datum. The proof uses the framework developed in \cite{CCR}
and is based on local Lipschitz continuity of the operator $F=F[f]$ given by \eqref{F}.
Without going into details, we note that the proof can be easily adapted
to provide analogous results for the Hegselmann-Krause system \eqref{mf:HK}--\eqref{F}.
Instead, we merely restate the stability results in Wasserstein distance for the two systems,
which are essential for our proof of asymptotic consensus and flocking.
For their proof we refer to \cite[Theorem 3.6]{ChoiH1}.

\begin{theorem}\label{thm:stabHK}
Let $f_1, f_2 \in C([0, T];\P(\R^{d}))$ be two measure-valued solutions of \eqref{mf:HK}--\eqref{F} on the time interval $[0, T]$,
subject to the compactly supported initial data $f_1^0, f_2^0 \in C([-\taumax, 0];\P_1(\R^{d}))$.
Then there exists a constant $L=L(T)$ such that
\[ 
   \W_1 (f_1(t),f_2(t)) \leq L \max_{s\in[-\taumax,0]} \W_1(f^0_1(s),f^0_2(s)) \quad \mbox{for} \quad t \in [0,T],
\]
where $\W_1(f_1(t),f_2(t))$ denotes the 1-Wasserstein (or Monge-Kantorovich-Rubinstein) distance \cite{Villani}
of the probability measures $f_1(t)$, $f_2(t)$. 
\end{theorem}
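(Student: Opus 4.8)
The plan is to reduce the stability estimate to a Gronwall argument for the 1-Wasserstein distance between the two measure-valued solutions. First I would recall that both solutions can be represented via their characteristic flows: if $f_i^0$ is the initial datum and $\Phi_i(t,s,\cdot)$ denotes the flow map generated by the velocity field $F[f_i](t,\cdot)$, then $f_i(t) = \Phi_i(t,0,\cdot)_{\#} f_i^0(0)$, and more generally one transports an optimal coupling of the initial data forward in time. Concretely, fix an optimal transport plan $\pi^0_s$ between $f_1^0(s)$ and $f_2^0(s)$ for each $s\in[-\taumax,0]$, push it forward along the pair of characteristic flows, and use the resulting (not necessarily optimal) coupling at time $t$ as an admissible competitor in the definition of $\W_1(f_1(t),f_2(t))$. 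This yields
\[
  \W_1(f_1(t),f_2(t)) \leq \int_{\R^d\times\R^d} |X_1(t;x,y) - X_2(t;x,y)|\, \d\pi^0_0(x,y),
\]
where $X_i$ are the characteristics. Differentiating $|X_1(t)-X_2(t)|$ in $t$ along the characteristics and using \eqref{eq:HK}'s mean-field form, the difference is controlled by $|X_1(t)-X_2(t)|$ plus the discrepancy $|F[f_1](t,X_1(t)) - F[f_2](t,X_2(t))|$, and the latter splits, via the triangle inequality, into a term $|F[f_1](t,X_1(t)) - F[f_1](t,X_2(t))|$ handled by local Lipschitz continuity of $F[f_1]$ in the space variable, and a term $|F[f_1](t,X_2(t)) - F[f_2](t,X_2(t))|$ which measures sensitivity of the operator $F$ with respect to its measure argument.

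The second ingredient I would establish is precisely that measure-sensitivity bound: for fixed $t$ and $x$,
\[
  |F[f_1](t,x) - F[f_2](t,x)| \leq \Lambda\, \W_1\bigl(f_1(t-\tau(t)),\, f_2(t-\tau(t))\bigr),
\]
with $\Lambda$ depending only on $T$, on the (uniformly bounded, by a support-propagation argument as in Lemma \ref{lem:Rxbound}) diameters of the supports on $[0,T]$, and on a uniform positive lower bound for the denominator $\int \psi(|x-y|) f_i(t-\tau(t),y)\,\d y$ — this lower bound is where the strict positivity \eqref{ass:psi} of $\psi$ is used, since on a bounded region $\psi$ is bounded below by a positive constant. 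The quotient structure of $F$ in \eqref{F} means one estimates numerator and denominator separately and combines them using $|a_1/b_1 - a_2/b_2| \leq |a_1-a_2|/b_1 + |a_2||b_1-b_2|/(b_1 b_2)$; each of $|a_1-a_2|$ and $|b_1-b_2|$ is bounded by $\W_1$ of the (delayed) measures because $y\mapsto \psi(|x-y|)(y-x)$ and $y\mapsto\psi(|x-y|)$ are Lipschitz on the relevant bounded set (using $\psi\in C$, hence locally Lipschitz after a mollification argument, or simply uniformly continuous, which suffices with the modulus replaced appropriately — but for the clean Lipschitz statement one assumes, as in \cite{ChoiH1}, that $\psi$ is locally Lipschitz, which is consistent with the setting there).

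Combining these, and writing $D(t) := \W_1(f_1(t),f_2(t))$ for $t\in[-\taumax,T]$, the characteristic estimate gives a delay differential inequality of the form
\[
  D(t) \leq D(0) + \int_0^t \bigl( c_1 D(s) + c_2 D(s-\tau(s)) \bigr)\, \d s,
\]
valid for $t\in[0,T]$, with $c_1, c_2$ depending on $T$. Since $D(s) = \W_1(f_1^0(s),f_2^0(s))$ for $s\in[-\taumax,0]$, a standard Gronwall argument for delay inequalities (bounding $D(s-\tau(s)) \leq \sup_{[-\taumax,\,t]} D$ and iterating, or simply applying Gronwall's lemma to $M(t):=\sup_{[-\taumax,t]}D$) yields $D(t) \leq L(T)\max_{s\in[-\taumax,0]}\W_1(f_1^0(s),f_2^0(s))$ with $L(T) = e^{(c_1+c_2)T}$ times a constant, which is the claim. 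The main obstacle is the measure-sensitivity estimate for the nonlinear quotient operator $F$: keeping the constant $\Lambda$ uniform on $[0,T]$ requires the a priori uniform bound on the supports (so that $\psi$ stays bounded below and the Lipschitz constants stay finite), and the delayed argument $t-\tau(t)$ in $F$ means one must carry the delayed Wasserstein distance through the Gronwall loop — but since $\tau$ is bounded by $\taumax$ this is handled by including $[-\taumax,0]$ in the sup. Everything else is the routine machinery of transporting couplings along characteristic flows, and in any case the full details are available in \cite[Theorem 3.6]{ChoiH1}.
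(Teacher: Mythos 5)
Your plan is essentially the same argument as the one the paper relies on: the paper does not prove Theorem \ref{thm:stabHK} itself but cites \cite[Theorem 3.6]{ChoiH1}, whose proof (in the framework of \cite{CCR}) is exactly the route you sketch --- transporting an optimal coupling along the characteristic flows, a $\W_1$-Lipschitz sensitivity estimate for the quotient operator $F[f]$ (using the uniform support bound and the positive lower bound on the denominator from \eqref{ass:psi}), and a Gronwall-type closure that absorbs the delayed argument by taking the supremum over $[-\taumax,t]$. You also correctly flag the only delicate point, namely that the clean Lipschitz estimates require $\psi$ to be locally Lipschitz as assumed in \cite{ChoiH1}, which is slightly stronger than the bare continuity hypothesis \eqref{ass:psi} of this paper.
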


\begin{theorem}\label{thm:stabCS}
Let $g_1, g_2 \in C([0, T];\P(\R^{d}\times\R^{d}))$ be two measure-valued solutions of \eqref{mf:CS}--\eqref{G} on the time interval $[0, T]$,
subject to the compactly supported initial data $g_1^0, g_2^0 \in C([-\taumax, 0];\P_1(\R^{d}\times\R^{d}))$.
Then there exists a constant $L=L(T)$ such that
\[ 
   \W_1 (g_1(t),g_2(t)) \leq L \max_{s\in[-\taumax,0]} \W_1(g^0_1(s),g^0_2(s)) \quad \mbox{for} \quad t \in [0,T].
\]
\end{theorem}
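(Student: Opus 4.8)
The plan is the standard route to Wasserstein stability for Vlasov-type equations, following the framework of \cite{CCR, ChoiH1} and adding only the bookkeeping forced by the delay: represent both measure-valued solutions by their characteristic flows, compare the two flows along an optimal transference plan between the initial data, and close the resulting differential estimate by a Gronwall argument in which the delayed term is controlled by the running supremum of the distance. I describe it for Theorem \ref{thm:stabCS}; Theorem \ref{thm:stabHK} is the same argument with the transport variable absent.

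First I would fix a common compact set $K_T$. Since $g_1^0,g_2^0$ are supported in a fixed compact set on $[-\taumax,0]$, running the argument of Lemma \ref{lem:Rxbound} along characteristics — exactly as in the passage leading to \eqref{Rvbound} in the particle case — keeps the velocity supports bounded for all $t\ge0$, so the position supports grow at most linearly in $t$ and there is a compact $K_T\subset\R^d\times\R^d$ with $\supp g_i(t)\subset K_T$ for all $t\in[-\taumax,T]$ and $i=1,2$. On measures supported in $K_T$ the operator $G[\cdot]$ of \eqref{G} is nondegenerate: writing $r_0$ for the diameter of the $x$-projection of $K_T$, the denominator is bounded below by $\Psi(r_0)>0$ by \eqref{ass:psi} (with $\Psi$ as in \eqref{def:Psi}), while the numerator is bounded. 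Combining this lower bound with a Lipschitz modulus of $\psi$ on the relevant range and Kantorovich–Rubinstein duality, one derives a local Lipschitz estimate of the form
\[
   \left|G[g](t,z)-G[\widetilde g](t,\widetilde z)\right| &\le& L_T\left(|z-\widetilde z| + \W_1\left(g(t-\tau(t)),\widetilde g(t-\tau(t))\right)\right)
\]
for all $g,\widetilde g$ supported in $K_T$, with $L_T$ depending only on $K_T$ and on $\psi$; the quotient is handled by estimating numerator and denominator separately. This same estimate provides existence, uniqueness and the characteristic representation of the measure-valued solutions, as in \cite{CCR, ChoiH1}.

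Next, write $g_i(t)=Z^i_t\#g_i^0(0)$, where $Z^i_t(z_0)=(X^i(t;z_0),V^i(t;z_0))$ solves the (delay) characteristic system $\dot X^i=V^i$, $\dot V^i=G[g_i](t,Z^i_t)$, $Z^i_0=\mathrm{id}$, and $g_i(s)=g_i^0(s)$ on $[-\taumax,0]$. Let $\pi^0$ be an optimal coupling of $g_1^0(0)$ and $g_2^0(0)$; pushing $\pi^0$ forward by $(z_0,\widetilde z_0)\mapsto(Z^1_t(z_0),Z^2_t(\widetilde z_0))$ yields an admissible plan for $(g_1(t),g_2(t))$, hence $\W_1(g_1(t),g_2(t))\le D(t):=\int|Z^1_t(z_0)-Z^2_t(\widetilde z_0)|\,\d\pi^0$, with $D(0)=\W_1(g_1^0(0),g_2^0(0))$. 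Differentiating $|Z^1_t-Z^2_t|$, bounding $|V^1-V^2|\le|Z^1_t-Z^2_t|$, applying the Lipschitz estimate above and integrating against $\pi^0$ gives
\[
   \tot{}{t}D(t) &\le& (1+L_T)\,D(t) + L_T\,\W_1\left(g_1(t-\tau(t)),g_2(t-\tau(t))\right) \qquad\mbox{for a.e. } t\in(0,T).
\]
For $t-\tau(t)\ge0$ one has $\W_1(g_1(t-\tau(t)),g_2(t-\tau(t)))\le D(t-\tau(t))$, and for $t-\tau(t)<0$ it is at most $D_0:=\max_{s\in[-\taumax,0]}\W_1(g_1^0(s),g_2^0(s))$; since also $D(0)\le D_0$, introducing $\bar D(t):=\max\{D_0,\sup_{s\in[0,t]}D(s)\}$ turns the inequality into $\bar D(t)\le D_0+(1+2L_T)\int_0^t\bar D(s)\,\d s$, and Gronwall's lemma gives $\bar D(T)\le D_0\,e^{(1+2L_T)T}$. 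As $\W_1(g_1(t),g_2(t))\le D(t)\le\bar D(t)$, this is the claim with $L=L(T)=e^{(1+2L_T)T}$; for \eqref{mf:HK} one simply drops the velocity component, the characteristics solving $\dot X=F[f](t,X)$ in a single-variable phase space.

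The main obstacle is the local Lipschitz estimate for $G$: it is a quotient of two nonlocal integrals, so one must bound the numerator and, above all, keep the denominator away from zero — which is exactly where the uniform compact support from the first step and the strict positivity of $\psi$ in \eqref{ass:psi} enter — and passing from "$\psi(|x-y|)$ close" to a $\W_1$-estimate genuinely requires a Lipschitz bound on $\psi$ over the relevant compact range, slightly more than \eqref{ass:psi} alone, which is precisely the hypothesis under which \cite[Theorem 3.6]{ChoiH1} is proved. Once this estimate and the resulting well-posedness are in hand, the flow comparison and the delay bookkeeping are routine.
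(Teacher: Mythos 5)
Your proposal is essentially the proof the paper relies on: the paper itself does not prove this statement but defers to \cite[Theorem 3.6]{ChoiH1}, whose argument is exactly the \cite{CCR}-style scheme you outline --- uniform compactness of the supports, a local Lipschitz/$\W_1$-stability estimate for the normalized operator $G$ (with the denominator bounded below via the positivity of $\psi$ on the relevant compact range), comparison of the characteristic flows along an optimal coupling of the initial data, and a Gronwall argument in which the delayed term is controlled by the running supremum together with the history on $[-\taumax,0]$. You also correctly flag that this needs Lipschitz continuity of $\psi$ on compact sets, which is indeed a hypothesis of \cite[Theorem 3.6]{ChoiH1} beyond \eqref{ass:psi}, so your sketch is consistent with the intended proof.
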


We are now in position to provide a proof of Theorem \ref{thm:mf:HK}.

\begin{proof}
Fixing an initial datum $f^0 \in C([-\taumax,0], \P(\R^d))$, uniformly compactly supported in the sense of \eqref{IC:mf:HK},
we construct $\{f^0_N\}_{N\in\N}$ a family of $N$-particle approximations of $f^0$, i.e.,
\[
   f^0_N(s,x) := \frac{1}{N} \sum_{i=1}^N  \delta(x-x^0_i(s)) \qquad\mbox{for } s\in[-\taumax,0],
\]
where the $x_i^0\in C([-\taumax,0];\R^d)$ are chosen such that
\[
   \max_{s\in[-\taumax,0]} \W_1(f^0_N(s),f^0(s)) \to 0 \quad\mbox{as}\quad N\to\infty.
\]
Denoting then $x^N_i=x^N_i(t)$ the solution of the discrete Hegselmann-Krause system \eqref{eq:HK}
subject to the initial datum $x_i^0=x_i^0(s)$, $i=1,\dots,N$, the proof of Theorem \ref{thm:HK}
gives exponential convergence to global consensus, i.e.,
\[
   d_x(t) \leq \left( \max_{s\in[-\taumax,0]} d_x(s) \right) e^{-C_N t} \qquad \mbox{for } t \geq 0,
\]
with the diameter $d_x$ defined in \eqref{def:dX}, $C_N$ the unique solution of \eqref{ass_C}
with $a_N := \frac{N-2}{N-1}\underline{\psi}  \in (0,1)$ and $\underline{\psi}$ given by \eqref{def:upsi}.
Note that $C_N$ increases with $N$ and $\lim_{N\to\infty} C_N = C$,
with $C$ the unique solution of \eqref{ass_C} with $a:=\underline{\psi}$.
It is easy to check that the empirical measure
\[
   f^N(t,x) := \frac{1}{N} \sum_{i=1}^N  \delta(x-x^N_i(t))
\]
is a measure valued solution of \eqref{eq:HK}.
For any fixed $T>0$, Theorem \ref{thm:stabHK} provides the stability estimate
\[
   \W_1 (f(t),f^N(t)) \leq L \max_{s\in[-\taumax,0]} \W_1 (f^0(s),f^0_N(s)) \qquad \mbox{for} \quad t \in [0,T),
\]
where the constant $L>0$ is independent of $N$.
Thus, fixing $T>0$ and letting $N\to\infty$ implies $d_x[f(t)] = d_x(t)$ on $[0,T)$, and, consequently,
\[
   d_x[f(t)] \leq \left( \max_{s\in[-\taumax,0]} d_x[f^0(s)] \right) e^{-Ct} \qquad \mbox{for } t \in[0,T).
\]
We conclude by noting that $T>0$ can be chosen arbitrarily and that the constant $C$ is independent of time.
\end{proof}

The proof of Theorem \ref{thm:mf:CS} is an obvious modification of the above proof,
with stability provided by Theorem \ref{thm:stabCS}.

\section*{Acknowledgment}
JH acknowledges the support of the KAUST baseline funds.



\end{document}